\newtheorem{theorem}{Theorem}
\theoremstyle{plain}
\newtheorem{corollary}[theorem]{Corollary}
\newtheorem{lemma}[theorem]{Lemma}
\newtheorem{proposition}[theorem]{Proposition}
\newtheorem{remark}[theorem]{Remark}
\numberwithin{equation}{section}
\def\eps{\varepsilon}
\newcommand{\R}{\mathbb{R}}
\newcommand{\mS}{\mathbb{S}}
\newcommand{\N}{\mathbb{N}}
\newcommand{\B}{{\bf B}}
\newcommand{\cC}{{\mathcal C}}
\newcommand{\cM}{{\mathcal M}}
\renewcommand{\phi}{\varphi}
\DeclareMathOperator{\divergenz}{div}
\DeclareMathOperator{\const}{const.}
\DeclareMathOperator{\gr}{gr}
\newcommand{\dist}{\text{\rm dist}}
\newenvironment{altproof}[1]
{\noindent
{\bf Proof of {#1}}.}
{\nopagebreak\mbox{}\hfill $\Box$\par\addvspace{0.5cm}}
\begin{document}

\title[A priori bounds for higher-order elliptic problems]{A priori bounds and 
a Liouville theorem on a half-space for higher-order elliptic 
Dirichlet problems} 

\author{Wofgang Reichel}
\address{W. Reichel \hfill\break 
Mathematisches Institut, Universit\"at Giessen \hfill\break
Arndtstr. 2, D-35392 Giessen, Germany}
\email{wolfgang.reichel@math.uni-giessen.de}

\author{Tobias Weth}
\address{T. Weth \hfill\break 
Mathematisches Institut, Universit\"at Giessen \hfill\break
Arndtstr. 2, D-35392 Giessen, Germany}
\email{tobias.weth@math.uni-giessen.de}
\date{\today}

\subjclass[2000]{Primary: 35J40; Secondary: 35B45}
\keywords{Higher order equation, a priori bounds, Liouville theorems, moving plane method}

\begin{abstract} 
We consider the $2m$-th order elliptic boundary value problem 
$Lu=f(x,u)$ on a bounded smooth domain $\Omega\subset\R^N$ with Dirichlet boundary conditions $u=
\frac{\partial}{\partial\nu}u= \ldots=(\frac{\partial}{\partial\nu})^{m-1} u =0$ on 
$\partial\Omega$. The operator $L$ is a uniformly elliptic operator of order 
$2m$ given by $L=\big(-\sum_{i,j=1}^N a_{ij}(x) 
\frac{\partial^2}{\partial x_i\partial x_j}\big)^m 
+\sum_{|\alpha|\leq 2m-1} b_\alpha(x) D^\alpha$. 
For the nonlinearity we assume that $\lim_{s\to\infty}\frac{f(x,s)}{s^q}=h(x)$, 
$\lim_{s\to-\infty}\frac{f(x,s)}{|s|^q}=k(x)$ where $h,k\in C(\overline{\Omega})$ are positive 
functions and $q>1$ if $N\leq 2m$, $1<q<\frac{N+2m}{N-2m}$ if $N>2m$.
We prove a priori bounds, i.e, we show that 
$\|u\|_{L^\infty(\Omega)} \leq C$ for every solution $u$, where $C>0$ 
is a constant. The solutions are allowed to be sign-changing. The 
proof is done by a blow-up argument which relies on the following new Liouville-type theorem on a 
half-space: if $u$ is a classical, bounded, non-negative solution of 
$(-\Delta)^m u = u^q$ in $\R^N_+$ with 
Dirichlet boundary conditions on $\partial\R^N_+$ and $q>1$ if $N\leq 2m$, 
$1<q\leq\frac{N+2m}{N-2m}$ if $N>2m$ then $u\equiv 0$.  
\end{abstract}

\maketitle



\section{Introduction}
A priori bounds for solutions of elliptic boundary value problems have been of major importance 
at least as far back as Schauder's work in the 1930s. In this paper we
prove a priori estimates 
on bounded, smooth domains $\Omega\subset\R^N$ for solutions of higher order boundary value 
problems of the form 
\begin{equation}
L u = f(x,u) \mbox{ in } \Omega, \quad u=\frac{\partial}{\partial\nu}u=
\ldots=\left(\frac{\partial}{\partial\nu}\right)^{m-1} u =0 \mbox{ on } 
\partial\Omega.
\label{basic}
\end{equation}
Here $\nu$ is the unit exterior normal on $\partial\Omega$ and 
$$
L = \Bigl(- \sum_{i,j=1}^N a_{ij}(x) 
\frac{\partial^2}{\partial x_i\partial x_j}\Bigr)^m 
+\sum_{|\alpha|\leq 2m-1} b_\alpha(x) D^\alpha 
$$
is a uniformly elliptic operator with coefficients $b_\alpha\in L^\infty(\Omega)$ and 
$a_{ij}\in C^{2m-2}(\overline{\Omega})$ such that there exists a 
constant $\lambda>0$ with $\lambda^{-1} |\xi|^2 \leq \sum_{i,j=1}^N a_{ij}(x)\xi_i\xi_j \leq 
\lambda |\xi|^2$ for all $\xi\in \R^N$, $x\in \Omega$. Our main result is the following:

\begin{theorem}
Suppose $\Omega\subset\R^N$ is a bounded domain with $\partial\Omega\in 
C^{2m}$. Let $m\in \N$ and assume that $q>1$ if $N\leq 2m$ and $1<q<\frac{N+2m}{N-2m}$ if $N>2m$. 
Suppose further that there exist positive, continuous 
functions $k,h: \overline{\Omega}\to (0,\infty)$ such that 
\begin{equation}
  \label{eq:asymptotic}
\lim_{s\to +\infty} \frac{f(x,s)}{s^q} = h(x), \quad 
\lim_{s\to-\infty} \frac{f(x,s)}{|s|^q}= k(x)
\end{equation}
uniformly with respect to $x\in \overline{\Omega}$. 
Then there exists a constant $C>0$ depending only on the data 
$a_{ij}, b_\alpha, \Omega, N, q, h, k$ such that $\|u\|_\infty \leq C$ for 
every solution $u$ of \eqref{basic}. 
\label{main}
\end{theorem}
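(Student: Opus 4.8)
The plan is to argue by contradiction via the classical blow-up (rescaling) method of Gidas and Spruck, reducing the a priori bound to the Liouville theorem announced in the abstract. Suppose no such constant $C$ exists; then there is a sequence of solutions $u_n$ of \eqref{basic} with $M_n := \|u_n\|_\infty = |u_n(x_n)| \to \infty$ for some $x_n \in \overline{\Omega}$. Passing to a subsequence we may assume $x_n \to x_0 \in \overline{\Omega}$. Introduce the rescaling parameter $\mu_n := M_n^{-(q-1)/(2m)} \to 0$, which is the natural scaling that makes the $2m$-th order operator balance the power nonlinearity $u^q$, and set
\[
v_n(y) := \frac{1}{M_n}\, u_n(x_n + \mu_n y)
\]
for $y$ in the rescaled domain $\Omega_n := \mu_n^{-1}(\Omega - x_n)$. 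Then $\|v_n\|_\infty = 1$, $|v_n(0)| = 1$, and $v_n$ solves a rescaled equation whose leading part converges, after freezing coefficients at $x_0$ and performing a linear change of variables to normalize $(a_{ij}(x_0))$ to the identity, to $(-\Delta)^m$; the lower-order terms $b_\alpha D^\alpha$ pick up positive powers of $\mu_n$ and disappear in the limit, and the right-hand side converges to $h(x_0)|w|^{q-1}w$ or a one-signed variant using \eqref{eq:asymptotic}.

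The next step is to obtain uniform local regularity and pass to the limit. From $\|v_n\|_\infty = 1$ and the rescaled equation, interior and boundary $L^p$ (Agmon–Douglis–Nirenberg) estimates for the $2m$-th order Dirichlet problem give uniform bounds for $v_n$ in $W^{2m,p}_{loc}$ on compact subsets of the limiting domain, hence, by Sobolev embedding and a diagonal argument, convergence in $C^{2m-1}_{loc}$ (and in $C^{2m}_{loc}$ away from the boundary) to a limit function $w$. Two geometric cases arise depending on the behavior of $d_n := \mu_n^{-1}\dist(x_n,\partial\Omega)$: if $d_n \to \infty$, the rescaled domains exhaust $\R^N$ and $w$ is a bounded solution of $(-\Delta)^m w = c\,|w|^{q-1}w$ on all of $\R^N$ with $\|w\|_\infty = 1$; if $d_n$ stays bounded, then after an additional translation the rescaled domains exhaust a half-space $\R^N_+$ (using $\partial\Omega \in C^{2m}$ so the boundary flattens out), the Dirichlet conditions survive in the limit, and $w$ solves the half-space problem $(-\Delta)^m w = c\,|w|^{q-1}w$ in $\R^N_+$, $w = \partial_\nu w = \dots = \partial_\nu^{m-1} w = 0$ on $\partial\R^N_+$, again with $\|w\|_\infty = 1$.

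Finally I would invoke the Liouville theorems to reach a contradiction. For the whole-space case one uses the known result of Wei–Xu (for the $2m$-th order equation $(-\Delta)^m w = w^q$, $1 < q \le \frac{N+2m}{N-2m}$, no positive bounded entire solution); to handle the sign-changing situation one either absorbs it by observing that the limit inherits a sign from a Kelvin-transform / maximum-principle argument, or — more robustly — one notes that the blow-up point can be chosen where $u_n$ is of one sign, so that $w \ge 0$ or $w \le 0$. For the half-space case one uses the new Liouville theorem stated in the abstract, whose proof (via the moving-plane method, the subject of the rest of the paper) I assume here. In either case $w \equiv 0$, contradicting $|w(0)| = 1$. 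The main obstacle I anticipate is the boundary case: making the domain-flattening rigorous for a $2m$-th order operator — one must control how the higher-order Dirichlet data transform under the local diffeomorphism straightening $\partial\Omega$ and ensure the flattened operators still converge to $(-\Delta)^m$ in a strong enough topology for the ADN estimates to apply uniformly up to the boundary — and of course the half-space Liouville theorem itself is the genuinely new input, but that is proved separately.
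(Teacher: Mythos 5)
Your blow-up scheme, the scaling exponent $\mu_n = M_n^{-(q-1)/(2m)}$, the two geometric cases (interior versus boundary), the use of ADN estimates for uniform $W^{2m,p}_{\mathrm{loc}}$ bounds, and the final appeal to the full-space and half-space Liouville theorems all match the paper's argument. The one place where your proposal has a genuine gap is the treatment of sign-changing solutions, and this is not a minor detail: it is precisely the issue the paper singles out as requiring new work.

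First, the limit right-hand side is not $c\,|w|^{q-1}w$. The two asymptotic conditions in \eqref{eq:asymptotic} together yield a limit nonlinearity
\[
g(s)=\begin{cases} h(\bar x)\,s^{q}, & s\ge 0,\\ k(\bar x)\,|s|^{q}, & s\le 0,\end{cases}
\]
which is nonnegative and convex rather than odd; the sign structure matters in what follows.

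Second, and more importantly, neither ``choose the blow-up point where $u_n$ is of one sign'' nor a maximum-principle argument delivers nonnegativity of the limit $w$. Even if $u_n(x_n)=+M_n$, the rescaled functions $v_n$ still range over $[-1,1]$, and there is no local maximum principle for $(-\Delta)^m$ with $m\ge 2$ that would force $w\ge 0$ from $w(0)=1$. In fact the paper explicitly notes that negative blow-up (a sequence not bounded below) cannot be excluded directly; it is ruled out only \emph{a posteriori}, after passing to the limit equation. The step you are missing is the argument showing that any bounded entire classical solution of $(-\Delta)^m w=g(w)$ on $\R^N$, with $g$ convex, nonnegative, and $g(s)>0$ for $s<0$, must satisfy $w\ge 0$ (proved in the paper's Appendix via spherical averages and Jensen's inequality), and the analogous positivity in the half-space case, obtained from the Green function representation $w(x)=\int_{\R^N_+}G^+_\infty(x,y)(-\Delta)^m w(y)\,dy$ established in Section~\ref{sec:representation}, which requires bounds on all derivatives up to order $2m-1$. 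Without these inputs you have no license to invoke the Liouville theorems, which are stated for nonnegative solutions only; so as written the contradiction does not close. Once you add these two positivity steps, your outline coincides with the paper's proof.
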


\begin{remark} 
Suppose the nonlinearity depends on a real parameter $\lambda$, i.e, 
$f_\lambda: \Omega\times \R\to \R$ and 
$$
\lim_{s\to +\infty} \frac{f_\lambda(x,s)}{\lambda s^q} = h(x), \quad 
\lim_{s\to -\infty} \frac{f_\lambda(x,s)}{\lambda|s|^q}= k(x)
$$
uniformly with respect to $x\in \Omega$ and $\lambda\in [\lambda_0,\infty)$ where $\lambda_0>0$. 
Then the a priori bound of Theorem \ref{main} depends additionally on $\lambda_0$ but not 
on $\lambda$. This is important in the study of global solution branches of a parameter dependent 
version of \eqref{basic}, which we will pursue in future work.
\label{remark_to_main}
\end{remark}

We are focusing on the case of superlinear nonlinearities $f(x,u)$ with subcritical 
growth. A model nonlinearity is $f(x,s)=|s|^q$. Our results hold with no restriction on the 
shape of the domain $\Omega$ and for general, possibly sign-changing solutions. This is important 
since the lack of the maximum principle for higher order equations does not allow to restrict 
attention to positive solutions only. 

In the second-order case $m=1$ a priori bounds for positive solutions
have been 
established for subcritical, superlinear nonlinearities via different methods by 
Brezis, Turner \cite{BT}, Gidas, Spruck \cite{GS1}, 
DeFigueiredo, Lions and Nussbaum \cite{DLN} and recently by Quittner, Souplet \cite{QS} and 
McKenna, Reichel \cite{MCR}. In the higher-order 
case $m\geq 2$ the theory is far less developed and strongly depends
on the type of boundary conditions considered. For Dirichlet boundary 
conditions we only know of a result of Soranzo \cite{SO}, who 
proved a priori bounds for positive radial solutions on a ball if $L=(-\Delta)^m$. 
For Navier boundary conditions the picture is more complete. Let $L= (-L_0)^m$ 
where $L_0=a_{ij}\frac{\partial^2}{\partial x_i\partial x_j} + 
b_\alpha \frac{\partial}{\partial x_\alpha}$ is a 
second order operator and suppose the boundary conditions are of Navier-type: 
\begin{equation}
  \label{eq:navier}
u = (-L)u = \ldots =(-L)^{m-1} u=0 \mbox{ on } \partial \Omega.
\end{equation}
Soranzo \cite{SO} proved a priori bounds for 
positive solutions if $L_0=\Delta$ and $\Omega$ is a bounded smooth convex domain. Recently, 
Sirakov \cite{SI} improved this result to general operators $L=(-L_0)^m$ and general bounded 
smooth domains. Both authors strongly use the fact that the boundary
conditions \eqref{eq:navier} allow to write the problem as a coupled
system of second order equations, where each equation is complemented 
with Dirichlet boundary conditions. In this case maximum principles
are available. In contrast, the higher order Dirichlet problem can not
be rewritten as a system and therefore requires different techniques.

In our approach we extend the so-called ``scaling argument'' of Gidas and 
Spruck \cite{GS1}, which they used to deal with the second order case
$m=1$ and positive solutions. Let us give a brief sketch of their 
method. Gidas and Spruck assume that there exists a sequence of positive solutions with 
$L^\infty$-norm tending to $+\infty$. After rescaling the 
solutions to norm $1$ and blowing-up the coordinates one can take a limit of the rescaled 
solutions and obtains a nontrivial positive solution of a limit boundary value
problem $-\Delta u = u^q$ 
on either the full-space $\R^N$ or the half-space $\R^N_+=\{x\in \R^N:x_1>0\}$ together with 
Dirichlet boundary conditions. Then a contradiction is reached
provided that a 
Liouville-type result is available, i.e., a result which shows that the non-negative solutions of 
the limit problem must be identically zero. For subcritical $q$ 
Gidas and Spruck \cite{GS1}, \cite{GS2} proved 
both the full-space and the half-space Liouville theorem for $-\Delta
u = u^q$ via the method of moving planes.

In order to deal with the higher order Dirichlet problem \eqref{basic} and
solutions which may change sign, the blow up procedure has to be
modified. Indeed, even under assumption \eqref{eq:asymptotic}, there
seems to be no direct argument to exclude the
case of {\em negative blow up} (i.e., the existence of a sequence of
solutions which is not uniformly bounded from below). Instead, it is excluded a
posteriori after passing to the limit equation. Once this is done, we
still need Liouville theorems for nonnegative solutions of the higher 
order problems on $\R^N$, 
$\R^N_+$. The full-space Liouville theorem stated next is already known; it was proved 
by Lin \cite{Lin} if $m=2$ and for general $m\geq 2$ by Wei, Xu \cite{WeiXu}. 

\begin{theorem}[Wei, Xu] Let $m \in \N$ and assume that $q>1$ if $N\leq 2m$ and 
$1<q<\frac{N+2m}{N-2m}$ if $N>2m$. If $u$ is a classical non-negative solution of 
$$
(-\Delta)^m u = u^q \mbox{ in } \R^N,
$$ 
then $u\equiv 0$.
\label{rn}
\end{theorem}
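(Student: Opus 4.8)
The plan is to establish the Liouville theorem in three steps, essentially following Lin \cite{Lin} (for $m=2$) and Wei--Xu \cite{WeiXu} in general: first reduce the scalar equation to a cooperative system whose components are non-negative; then run the method of moving planes on that system to obtain radial symmetry; and finally conclude with a Pohozaev-type identity — the step where the \emph{strict} subcriticality of $q$ is indispensable, since nontrivial solutions do exist at the critical exponent. \textbf{Step 1 (non-negativity of the iterated Laplacians).} Put $u_1:=u\ge 0$ and $u_{j+1}:=-\Delta u_j$, so that $-\Delta u_j=u_{j+1}$ for $1\le j\le m-1$ and $-\Delta u_m=u_1^q\ge 0$. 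The crucial — and most delicate — claim is that $u_j\ge 0$ on $\R^N$ for every $j$; this is precisely where the hypothesis $u\ge 0$ enters essentially, and the claim fails for sign-changing $u$. Fix $x_0$ and let $\psi_j(\rho)$ be the average of $u_j$ over $\partial B_\rho(x_0)$. Since $-\Delta u_m\ge 0$, the function $u_m$ is superharmonic, so $\psi_m$ is non-increasing with $\psi_m(0^+)=u_m(x_0)$; if $u_m(x_0)<0$ for some $x_0$, then $\psi_m\le u_m(x_0)<0$ on $(0,\infty)$. Integrating the relations $-\Delta u_j=u_{j+1}$ two at a time shows that $\psi_{m-1},\psi_{m-2},\dots$ then grow like increasing even powers of $\rho$ with alternating signs, so that $\psi_1$ (the average of $u$) either already tends to $-\infty$ — contradicting $u\ge 0$ — or grows at least like $\rho^{2(m-1)}$. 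In the latter case, $-\Delta u_m=u_1^q$ together with Jensen's inequality forces $u_m$ to grow at a strictly larger rate, and cascading back down yields a strictly larger growth rate for $\psi_1$; iterating this bootstrap drives the growth exponents to $+\infty$, and because $q>1$ one checks that this makes $\psi_1(\rho)=+\infty$ at every sufficiently large (finite) $\rho$, impossible for a classical solution on $\R^N$. Hence $u_m\ge 0$, and the same argument, applied successively to $u_{m-1},\dots,u_2$ (each time the runaway growth of $\psi_1$ either contradicts $u\ge 0$ or an already established non-negativity), gives $u_j\ge 0$ for all $j$.

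\textbf{Step 2 (symmetry).} Now $(u_1,\dots,u_m)$ solves a cooperative system with non-negative components. Assume first $N>2m$. Each $u_j$ is then a non-negative superharmonic function, so by the Riesz decomposition it is the Riesz potential of $u_{j+1}$ plus its greatest harmonic minorant, which is a non-negative harmonic function on $\R^N$, hence a constant; since a positive additive constant at any level would make the next potential diverge, all these constants vanish, and one obtains
\[
u(x)=c_{N,m}\int_{\R^N}\frac{u(y)^q}{|x-y|^{N-2m}}\,dy ,
\]
so that $\int_{\R^N}|y|^{2m-N}u(y)^q\,dy<\infty$ and $u(x)\to 0$ as $|x|\to\infty$. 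The method of moving planes — applied to this integral equation, or to the differential system after the Kelvin transform $\tilde u(x)=|x|^{2m-N}u(x/|x|^2)$ of each component in the spirit of Gidas--Spruck \cite{GS1,GS2} — then shows that $u$ is radially symmetric and non-increasing about some point $x_0$. When $N\le 2$ this machinery is unnecessary: $u_m=(-\Delta)^{m-1}u$ is a non-negative superharmonic function on $\R^N$, hence constant, and $-\Delta u_m=u^q$ forces $u\equiv 0$ at once. The remaining range $3\le N\le 2m$ is treated by the moving-plane method applied directly to the cooperative system, the required decay being read off from the iterated-potential representation of $u_m$.

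\textbf{Step 3 (Pohozaev).} It remains to exclude a nontrivial radial, non-increasing solution. Testing $(-\Delta)^m u=u^q$ against $x\cdot\nabla u$ on a ball $B_R$ and letting $R\to\infty$ — the boundary terms vanishing because $u$ and, by elliptic estimates on the potential representation, its derivatives decay — yields
\[
\Bigl(\frac{N}{q+1}-\frac{N-2m}{2}\Bigr)\int_{\R^N}u^{q+1}=0 ,
\]
and the same identity can be read directly off a dilation argument for the integral equation above. Since $1<q<\frac{N+2m}{N-2m}$ one has $\frac{N}{q+1}>\frac{N-2m}{2}$, whence $\int_{\R^N}u^{q+1}=0$ and $u\equiv 0$.

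I expect \textbf{Step 1} to be the main obstacle. The non-negativity of every iterated Laplacian is the ingredient that has no counterpart in the second-order theory (where $u\ge 0$ suffices outright), and it is the reason the higher-order Dirichlet problem genuinely needs new input: for $(-\Delta)^m$ there is no maximum or comparison principle, so this non-negativity must be forced ``by hand'' through the growth/blow-up analysis of spherical averages sketched above. Once the cooperative, componentwise non-negative system is in hand, Steps 2 and 3 follow the now-classical template of Gidas--Spruck and of the moving-plane method for integral equations, the higher-order features — transforming a system rather than a single equation, and extracting decay from iterated Newtonian potentials — being technical rather than conceptual.
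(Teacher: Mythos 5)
The paper does not prove Theorem~\ref{rn}; it is quoted as a known result, attributed to Lin for $m=2$ and to Wei and Xu for general $m$. So there is no ``paper's own proof'' for me to compare yours against, and I can only assess your sketch on its own terms and against what those references actually do.

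Your three-step outline is indeed the correct template, and you have correctly identified Step~1 (non-negativity of $(-\Delta)^j u$ for all $j$) as the step that is genuinely new to the higher-order setting and that uses $u\ge 0$ in an essential way. Your spherical-averages argument is the right mechanism. However, as written the bootstrap to exclude $u_m(x_0)<0$ is not a proof: you obtain, at each iteration, lower bounds of the form $\psi_1(\rho)\ge c_k\rho^{a_k}$ on $\rho\ge \rho_k$ with $a_k\to\infty$, but with constants $c_k$ and thresholds $\rho_k$ that degenerate as $k$ grows. ``Exponents tending to $\infty$'' does not by itself force a finite-time singularity; one must control $c_k$ and $\rho_k$ carefully, e.g.\ by an ODE comparison/blow-up argument for the radial averages, and this is where the actual work in Lin's and Wei--Xu's proofs lies. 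Your subsequent reduction (once $u_m\ge 0$ is known, $u_{m-1}<0$ somewhere drives $\psi_m\to-\infty$, contradicting $u_m\ge 0$) is correct and shorter, so the bottleneck really is showing $u_m\ge 0$.

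Step~2 is fine for $N>2m$ and trivially fine for $N\le 2$, but your handling of $3\le N\le 2m$ is not: here $2m-N\ge 0$, so the composed kernel $|x-y|^{2m-N}$ grows at infinity and the iterated Riesz potential representation of $u$ simply does not converge. One cannot ``read off decay'' from it. In this range the conclusion actually comes more directly: once the Newtonian representations $u_j(x)=c_N\int |x-y|^{2-N}u_{j+1}(y)\,dy$ (with zero harmonic part) are in hand, applying Fubini to the finiteness of $u_1,\dots,u_{m-1}$ at a single point forces $\int u^q\,K=\infty$ unless $u\equiv 0$, because the relevant iterated kernel $\int (1+|x|)^{2-N}|x-y|^{2(k-1)-N}\,dx$ diverges precisely when $N\le 2k$, and $k$ runs up to $m$. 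So for $3\le N\le 2m$ no moving planes and no Pohozaev is needed at all; your sketch spends effort there that the theorem does not require and suppresses the actual obstruction (non-convergence of the composed potential). Finally, in Step~3 the vanishing of the Pohozaev boundary terms needs the quantitative decay of $u$ and its derivatives of order up to $2m-1$; you gesture at ``elliptic estimates on the potential representation'' but this really has to be established from $u(x)=c\int |x-y|^{2m-N}u(y)^q\,dy$ (for $N>2m$), e.g.\ by first proving $u(x)\to 0$ and then bootstrapping decay rates, before the identity
\[
\Bigl(\tfrac{N}{q+1}-\tfrac{N-2m}{2}\Bigr)\int_{\R^N}u^{q+1}\,dx=0
\]
is legitimate.

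In summary: your outline is the right one and matches the cited literature structurally; the genuine gaps are (i) the blow-up bookkeeping behind ``$u_m\ge 0$'', (ii) the incorrect mechanism proposed for $3\le N\le 2m$, and (iii) the missing decay estimates needed to justify Pohozaev.
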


Even in the case of the Navier boundary conditions, the corresponding Liouville 
theorem for the polyharmonic problem in the half-space is harder to achieve and has only 
recently been proved by Sirakov~\cite{SI}. Due to the lack of a (local) maximum principle, the
corresponding Dirichlet problem is even more difficult to deal
with. Here we show the following new Liouville Theorem for the half-space 
which complements Theorem~\ref{rn}.

\begin{theorem} Let $m \in \N$ and assume that $q>1$ if $N\leq 2m$ and $1<q\leq \frac{N+2m}{N-2m}$ 
if $N>2m$. If $u$ is a classical non-negative bounded 
solution of 
\begin{equation}
  \label{eq:liouville-half-space}
(-\Delta)^m u = u^q \mbox{ in } \R^N_+, \quad u=
\frac{\partial}{\partial x_1}u=\ldots=
\frac{\partial^{m-1}}{\partial x_1^{m-1}} u =0 \mbox{ on }\partial \R^N_+
\end{equation}
then $u\equiv 0$.
\label{rn_plus}
\end{theorem}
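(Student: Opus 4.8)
The plan is to combine the known full-space Liouville theorem (Theorem~\ref{rn}) with the moving plane method in the direction orthogonal to the boundary, following the broad strategy that works for $m=1$ but handling the absence of a maximum principle by means of an integral/Kelvin-transform formulation. First I would reduce the problem to an integral equation: since $u$ is bounded and nonnegative with $(-\Delta)^m u = u^q\geq 0$ in $\R^N_+$ and homogeneous Dirichlet data on $\partial\R^N_+$, one expects $u$ to be given by convolution against the Green function $G$ of $(-\Delta)^m$ on the half-space with Dirichlet boundary conditions, i.e. $u(x)=\int_{\R^N_+} G(x,y)\,u(y)^q\,dy$. Establishing this representation (controlling growth at infinity so that no polyharmonic ``polynomial at infinity'' term appears) is the technical backbone; boundedness of $u$ is exactly what should kill the extra terms, perhaps after first proving decay estimates. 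The Green function $G$ for the polyharmonic Dirichlet problem on a half-space is explicit and, crucially, positive (this is a classical fact, unlike on general domains), which is what substitutes for the maximum principle.

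Once the integral equation is in hand, I would run the moving plane method on the integral equation itself (à la Chen--Li--Ou), sliding planes $\{x_1=\mu\}$ coming from $x_1=+\infty$. For each $\mu$, compare $u(x)$ with its reflection $u_\mu(x)=u(2\mu-x_1,x',\dots)$ wait, with the reflection $u_\mu(x)=u(\sigma_\mu(x))$ where $\sigma_\mu$ reflects across $\{x_1=\mu\}$; using monotonicity and positivity properties of $G$ under this reflection, show that $u\leq u_\mu$ in the strip $\{0<x_1<\mu\}$ for all $\mu>0$. Starting the procedure requires a decay estimate on $u$ near infinity in the $x_1$-direction, which again should follow from boundedness plus the Green representation. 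Moving the plane all the way to $\mu=+\infty$ yields that $u$ is monotone nondecreasing in $x_1$ on all of $\R^N_+$; since $u$ is also bounded, $\lim_{x_1\to\infty}u(x',x_1)=:v(x')$ exists. Here I'd have to be a little careful: the naive conclusion ``$u$ is independent of $x_1$'' is false (think of $u$ extended by $0$), so instead the monotonicity is exploited differently—either one shows $u\equiv 0$ directly from a Pohozaev/Rellich-type identity applied in the half-space once monotonicity is known, or one passes to the limit $u(x',x_1+t)$ as $t\to\infty$ to obtain a bounded nonnegative solution on all of $\R^N$ (if the limit is nontrivial) and invokes Theorem~\ref{rn} to get a contradiction, then feeds that back.

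A cleaner route for the final step, which I would actually try first: after establishing monotonicity in $x_1$, set $w(x)=\partial_{x_1}u(x)\geq 0$. Then $w$ solves $(-\Delta)^m w = q\,u^{q-1}w$ in $\R^N_+$ with $w=\partial_{x_1}w=\dots=\partial_{x_1}^{m-2}w=0$ on the boundary but $\partial_{x_1}^{m-1}w$ need not vanish; combined with $w\ge0$ and an integral representation, a suitable maximum-principle-type argument (or direct integration against $G$) should force either $w\equiv 0$ or a sign contradiction. If $w\equiv 0$ then $u=u(x')$ is $x_1$-independent, hence (by the Dirichlet condition $u|_{x_1=0}=0$ and monotonicity) $u\equiv 0$. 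The main obstacle, as flagged, is the first step: justifying the Green-function integral representation for an arbitrary bounded nonnegative classical solution, i.e. ruling out the homogeneous polyharmonic contributions; the subcritical/critical range $q\le\frac{N+2m}{N-2m}$ and boundedness are precisely the hypotheses that should make this work, likely via iterated estimates on $\Delta^j u$ and a Liouville-type statement for bounded polyharmonic functions on the half-space vanishing on the boundary together with their normal derivatives up to order $m-1$.
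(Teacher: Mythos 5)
Your "backbone" — establish a Green-function integral representation for bounded nonnegative solutions on the half-space, then run a Chen--Li--Ou style moving plane argument on the integral equation — matches the paper's strategy at the highest level, and your identification of that representation formula as the main technical hurdle is correct. But the direction in which you move the planes is where the proposal breaks, and where the paper's key idea lies.

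You propose sliding hyperplanes $\{x_1=\mu\}$ from $\mu=+\infty$ towards the boundary, aiming to prove monotonicity of $u$ in $x_1$. This cannot be started. On the unbounded half-space there is no compactification, so the usual device of controlling the measure of the ``bad set'' $W_\mu=\{u_\mu<u\}$ and letting it shrink to zero fails — the strips $\{0<x_1<\mu\}$ have infinite measure and you have no decay of $u$ as $|x'|\to\infty$ or $x_1\to\infty$: boundedness is the only hypothesis, and it is genuinely possible a priori that a bounded solution has no decay at all. Your own text flags this (``starting the procedure requires a decay estimate... which again should follow from boundedness plus the Green representation''), but it does not so follow; the paper explicitly makes no claim of decay and takes pains to avoid needing it. Your fallback routes are also incomplete: the function $w=\partial_{x_1}u$ does \emph{not} satisfy the full Dirichlet data (as you notice, $\partial_{x_1}^{m-1}w$ need not vanish), so the Green representation is unavailable for it, and there is no ``maximum-principle-type argument'' to compensate, precisely because $m\geq 2$.

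The paper's crucial idea, which your proposal misses, is to apply a Kelvin inversion $\phi:\B\to\R^N_+$ sending the point at infinity to the boundary point $P=-e_1$ of the unit ball, transforming the problem into an integral equation \eqref{eq:representation} for $v(x)=|x+e_1|^{2m-N}u(\phi(x))$ on $\B$, and then to move planes in directions $e\perp e_1$, i.e.\ \emph{parallel to $\partial\R^N_+$}, not perpendicular. On the bounded domain $\B$ the measure-shrinking argument, combined with the reflection inequalities of Berchio--Gazzola--Weth and a Hardy--Littlewood--Sobolev estimate, closes cleanly; the output is that $v$ is axially symmetric about the $x_1$-axis. Since this holds also after translating $u$ in any $x'$-direction, one concludes $u=u(x_1)$, and the theorem is finished not by a full-space Liouville theorem but by a first-integral/ODE argument (Theorem~\ref{ode-case}) for $(-1)^m u^{(2m)}=u^q$ with the $m$ Dirichlet conditions at $0$. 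In short: right framework, wrong plane direction, and the gaps you flag in your own final step are real and are resolved in the paper by the Kelvin transform and the ODE reduction rather than by monotonicity in $x_1$.
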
 
We point out that the critical case $q=\frac{N+2m}{N-2m}$ is allowed
in Theorem~\ref{rn_plus}. Note also that Theorem~\ref{rn_plus} holds in the class of bounded 
solutions. It remains an open problem to extend the result to the class of all 
(possibly unbounded) classical positive solutions.

Let us outline the proof of Theorem~\ref{rn_plus} and point out the
main difficulties. Following Gidas and Spruck, 
we transform the half-space problem via a Kelvin inversion into a
problem in 
the unit ball, where the point at infinity is mapped onto the boundary point 
$P=(-1,0,\ldots,0)$. The transformed solution satisfies
\begin{equation}
 \label{after_kelvin}
\begin{array}{ll}
(-\Delta)^m v = 2^{2m}|x-P|^{-\alpha} v^q & \mbox{ pointwise in } B_1(0), \vspace{\jot}\\
\displaystyle v=\frac{\partial}{\partial\nu}v=
\ldots=\left(\frac{\partial}{\partial\nu}\right)^{m-1} v =0 & \mbox{ on } 
\partial B_1(0)\setminus\{P\},
\end{array}
\end{equation}
where $\alpha=N+2m-q(N-2m)\geq 0$. The key step is to
show that $v$ is axially symmetric around the $x_1$-axis. In the second
order case $m=1$ this is proved with the classical moving plane method, which is a local 
method based on the maximum principle. The same local approach fails for the higher order
case $m\geq 2$ since the maximum principle is not available. Very recently, a new development 
in the moving plane procedure by Berchio, Gazzola and Weth \cite{BGW} overcame part of this 
difficulty.  The authors made the moving plane method applicable 
for classical solutions of polyharmonic Dirichlet problems on balls. 
Instead of a local maximum principle method they argue via the Green integral-representation and 
properties of the Green function. However, the method of \cite{BGW} does not
apply here, since the solution $v$ of \eqref{after_kelvin} may have a singularity at 
$P\in\partial B_1(0)$. To overcome this problem, a large part of this
work is devoted to show
that every solution $v$ of \eqref{after_kelvin} which corresponds to a bounded solution of
\eqref{eq:liouville-half-space} can be of represented via the Green
function. In this step we also use Green function estimates of Grunau
and Sweers \cite{GrSw}. Then we apply a moving plane argument, using the Green
function representation and the Hardy-Littlewood-Sobolev inequality,
to get the desired symmetry result. Comparing this variant of
the moving plane method with the one in \cite{BGW}, we point out that
Berchio, Gazzola and Weth allow more general (non-Lipschitz)
nonlinearities, but their argument relies on Green function representations
for directional derivatives of the solution which in our situation might not exist.

Once the symmetry result for $v$ is established, we readily conclude
-- following Gidas and Spruck \cite{GS1},\cite{GS2} again -- that the
corresponding solution $u$ of \eqref{eq:liouville-half-space} is axially symmetric around 
{\em any} axis parallel to the $x_1$-axis. Consequently, $u$ is a function of $x_1$ only and 
hence solves an ordinary differential equation. It is then easy to conclude that $u\equiv 0$. 

We recall that the original moving plane method goes back
to Alexandrov \cite{A} and Serrin \cite{S} 
and was further developed by Gidas, Ni, Nirenberg \cite{GNN} for second order equations. Recent 
improvements of the moving plane method for higher order equations and pseudo differential 
operators using integral representations rather than local 
maximum principles were achieved by Chang, Yang \cite{Chang_Yang}, Berchio, Gazzola, 
Weth \cite{BGW}, Li \cite{yyli}, Chen, Li, Ou \cite{Chen_Li_Ou} and 
Birkner, L\'{o}pez-Mimbela, Wakolbinger \cite{BiLoWa}.

The paper is organized as follows. In Section~\ref{sec:blow_up} we prove Theorem~\ref{main} 
assuming Theorem~\ref{rn_plus}. We give the details of the blow-up procedure taking into account 
that we allow for solutions blowing up to either at $+\infty$ or $-\infty$. The rest of the 
paper is devoted to the proof of Theorem~\ref{rn_plus}. In Section~\ref{sec:representation} we 
prove a Green-representation formula on half-spaces (Theorem~\ref{green-poiss-form_plus}) by 
approximating the half-space by a family of 
growing balls. Based on the Green-representation for balls and by a careful estimate of the 
boundary integrals and the monotone convergence theorem we obtain a Green-representation for the 
half-space. Finally, in Section~\ref{sec:proof-theorem} we prove
Theorem~\ref{rn_plus}.

\section{Proof of Theorem \ref{main} -- the blow-up argument} \label{sec:blow_up}

In this section we give the details of the blow-up argument for the proof of Theorem~\ref{main} 
under the assumption of the validity of the Liouville-type result of Theorem~\ref{rn_plus}. The 
proof uses standard linear $L^p$-$W^{2m,p}$ estimates for linear problems
\begin{align}
L u &= g(x) \mbox{ in } \Omega, \label{linear_eq}\\
u &=\frac{\partial}{\partial\nu}u=
\ldots=\left(\frac{\partial}{\partial\nu}\right)^{m-1} u =0 \mbox{ on } 
\partial\Omega. \label{linear_bc}
\end{align}
Recall the following basic estimate of Agmon, Douglis, Nirenberg \cite{ADN}.

\begin{theorem}[Agmon, Douglis, Nirenberg] Let 
$\Omega\subset\R^N$ be a bounded domain with $\partial\Omega\in C^{2m}, 
m\in \N$. Let $a_{ij}\in C^{2m-2}(\overline{\Omega})$, 
$b_\alpha \in L^\infty(\Omega)$, $g\in L^p(\Omega)$ for some 
$p\in (1,\infty)$. Suppose $u\in W^{2m,p}(\Omega)\cap W_0^{m,p}(\Omega)$ 
satisfies \eqref{linear_eq}. Then there exists a constant 
$C>0$ depending only on $\|a_{ij}\|_{C^{2m-2}}, \|b_\alpha\|_\infty, \lambda, 
\Omega, N, p, m$ and the modulus of continuity of $a_{ij}$ such that  
$$
\|u\|_{W^{2m,p}(\Omega)} \leq C(\|g\|_{L^p(\Omega)}+ \|u\|_{L^p(\Omega)}).
$$
\label{adn_global}
\end{theorem}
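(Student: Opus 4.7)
The plan is to reduce the global estimate to interior and boundary estimates for a constant-coefficient principal part via a standard freezing-of-coefficients argument, and then to patch them together with a partition of unity. First I would write $L = L_0 + L_1$ where $L_0 = (-\sum_{i,j} a_{ij}(x)\partial_i\partial_j)^m$ is the principal part and $L_1 = \sum_{|\alpha|\le 2m-1} b_\alpha D^\alpha$ collects the lower-order terms. Since $L_1$ has $L^\infty$ coefficients, its contribution $\|L_1 u\|_{L^p}\le C\|u\|_{W^{2m-1,p}}$ can be interpolated between $\|u\|_{W^{2m,p}}$ and $\|u\|_{L^p}$ (with small constant in front of the top-order part), and then absorbed into the left side. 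So it suffices to prove the estimate for $L_0$ alone, which is the genuine ADN step.

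Next I would establish an \emph{interior} estimate: cover $\Omega$ by small balls $B_r(x_k)$, and on each ball freeze the leading coefficients at $x_k$, writing $L_0 u = g + (L_0^{(x_k)} - L_0)u$, where $L_0^{(x_k)}$ has constant coefficients. On a ball in $\R^N$, the constant-coefficient polyharmonic-type operator $L_0^{(x_k)}$ is covered by the Calder\'on--Zygmund theorem applied to its fundamental solution, yielding $\|D^{2m}u\|_{L^p} \le C(\|L_0^{(x_k)} u\|_{L^p} + \|u\|_{L^p})$ with $C$ independent of $x_k$. If $r$ is chosen small enough that the modulus of continuity of the $a_{ij}$ makes $\|(L_0^{(x_k)} - L_0) u\|_{L^p(B_r)}$ a small multiple of $\|D^{2m} u\|_{L^p(B_r)}$, this term is absorbed, giving the standard local interior estimate after summation via a partition of unity $\{\chi_k\}$ (and commutator terms get bounded by lower-order norms).

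Then I would prove a \emph{boundary} estimate, which is the main obstacle. Near a boundary point I would straighten $\partial\Omega$ by a $C^{2m}$ diffeomorphism to the flat boundary of the half-space; the Dirichlet conditions $u=\partial_\nu u=\cdots=\partial_\nu^{m-1}u=0$ translate into $u\in W_0^{m,p}(\R^N_+)$ conditions, and the transformed operator still has $C^{2m-2}$ leading coefficients plus $L^\infty$ lower-order terms. On the half-space, after freezing coefficients as before, the task reduces to an $L^p$--$W^{2m,p}$ estimate for a constant-coefficient operator of the form $(-\sum a_{ij}\partial_i\partial_j)^m$ with Dirichlet data on $\{x_1=0\}$. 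This is the core technical input from \cite{ADN}: one verifies the complementing (Lopatinski\u\i--Shapiro) condition for the Dirichlet system and derives an explicit Poisson-type representation of $u$ in terms of $g$ via kernels that are Calder\'on--Zygmund operators of convolution type in the tangential variables; applying the tangential $L^p$ theorem and dualizing yields the half-space estimate. I expect this step --- verifying the complementing condition and handling the Poisson kernel in $L^p$ --- to be the genuinely hard part.

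Finally I would patch the interior estimate and the finitely many boundary estimates via a smooth partition of unity subordinate to the covering. Commutators between the cut-offs and $L_0$ produce lower-order terms, bounded by $\|u\|_{W^{2m-1,p}(\Omega)}$; interpolating these between $\|u\|_{W^{2m,p}}$ (absorbed) and $\|u\|_{L^p}$ (kept on the right) yields precisely the stated inequality $\|u\|_{W^{2m,p}(\Omega)}\le C(\|g\|_{L^p(\Omega)}+\|u\|_{L^p(\Omega)})$. The constant $C$ tracks the bounds on $\|a_{ij}\|_{C^{2m-2}}$, $\|b_\alpha\|_\infty$, the ellipticity constant $\lambda$, and (through the choice of covering radius) the modulus of continuity of the $a_{ij}$, as asserted.
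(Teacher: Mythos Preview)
The paper does not give a proof of this theorem: it is stated as a classical result with attribution to Agmon, Douglis, and Nirenberg \cite{ADN} and is simply quoted for later use. So there is no ``paper's own proof'' to compare your proposal against.

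That said, your outline is a faithful sketch of the standard argument in \cite{ADN}: reduce to the principal part by interpolation, freeze coefficients and use Calder\'on--Zygmund for the interior estimate, flatten the boundary and invoke the complementing condition for the half-space estimate, then patch via a partition of unity with commutator terms absorbed by interpolation. The identification of the half-space step (verifying Lopatinski\u\i--Shapiro and controlling the Poisson kernels in $L^p$) as the substantive part is accurate. For the purposes of this paper, however, none of this is needed; the theorem is used as a black box.
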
 

We will also be using the following local analogue of this result. 
Though the proof may be standard we give it for the reader's convenience.

\begin{corollary} 
Let $\Omega$ be a ball $\{x\in \R^N: |x|<R\}$ or a half-ball 
$\{x\in \R^N: |x|<R, x_1 >0\}$. Let $m\in \N$, 
$a_{ij}\in C^{2m-2}(\overline{\Omega})$, $b_\alpha \in L^\infty(\Omega)$, 
$g\in L^p(\Omega)$ for some $p\in (1,\infty)$. Suppose 
$u\in W^{2m,p}(\Omega)$ satisfies \eqref{linear_eq} 
\begin{itemize}
\item[(i)] either on the ball 
\item[(ii)] or on the half-ball together with the boundary conditions 
$u=\frac{\partial}{\partial x_1}u=\ldots=
\frac{\partial^{m-1}}{\partial x_1^{m-1}} u =0$ on 
$\{x\in\R^N: |x|<R, x_1=0\}$.
\end{itemize}
Then there exists a constant $C>0$ depending only on 
$\|a_{ij}\|_{C^{2m-2}}, \|b_\alpha\|_\infty, \lambda, \Omega, N, p, m$, the 
modulus of continuity of $a_{ij}$ and $R$ such that for any $\sigma\in (0,1)$ 
$$
\|u\|_{W^{2m,p}(\Omega\cap B_{\sigma R})} \leq \frac{C}{(1-\sigma)^{2m}}
(\|g\|_{L^p(\Omega)}+ \|u\|_{L^p(\Omega)}).
$$
\label{adn_local}
\end{corollary}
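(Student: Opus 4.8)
\noindent\emph{Proof strategy.} This is the standard localization of the global estimate of Theorem~\ref{adn_global}: one localizes with a cut-off, estimates the resulting right-hand side, and removes the intermediate-order derivatives by a scaled interpolation inequality combined with an iteration over a one-parameter family of radii. I sketch the plan.

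I would fix $\sigma\in(0,1)$ and, for $\sigma\le s<t\le 1$, choose $\eta\in C^\infty_c(\R^N)$ with $\eta\equiv 1$ on $B_{sR}$, $\supp\eta\subset B_{tR}$ and $|D^k\eta|\le C_k[(t-s)R]^{-k}$ for $0\le k\le 2m$. For $w:=\eta u$ the Leibniz rule gives $Lw=\eta g+[L,\eta]u=:G$, where $[L,\eta]$ is a differential operator of order $\le 2m-1$ whose order-$j$ coefficient is bounded by $C[(t-s)R]^{-(2m-j)}$, with $C$ depending only on $\|a_{ij}\|_{C^{2m-2}}$, $\|b_\alpha\|_\infty$, $N$, $m$, $R$. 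I would then check that $w$ satisfies the hypotheses of Theorem~\ref{adn_global}: in case~(i) the function $w$ vanishes near $\partial B_R$, so $w\in W^{2m,p}(B_R)\cap W_0^{m,p}(B_R)$ and the theorem applies on $B_R$; in case~(ii) the function $w$ vanishes near the spherical part of $\partial\Omega$, while on $\{x_1=0,\,|x|<R\}$ the Dirichlet data of $u$ together with the Leibniz rule give $w=\partial_{x_1}w=\dots=\partial_{x_1}^{m-1}w=0$, so after extending $w$ by zero and extending $a_{ij},b_\alpha$ off the half-ball (keeping $C^{2m-2}$, $L^\infty$ and uniform ellipticity) one may apply Theorem~\ref{adn_global} on a fixed bounded $C^{2m}$ domain $\Omega^{*}$ that agrees with the half-ball near $\supp w$. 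Since $w=u$ on $\Omega\cap B_{sR}$ and $\|w\|_{L^p}\le\|u\|_{L^p(\Omega)}$, this yields
\begin{equation*}
\|u\|_{W^{2m,p}(\Omega\cap B_{sR})}\le C\|g\|_{L^p(\Omega)}+C\sum_{j=0}^{2m-1}\frac{\|D^j u\|_{L^p(\Omega\cap B_{tR})}}{[(t-s)R]^{2m-j}}+C\|u\|_{L^p(\Omega)}.
\end{equation*}

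Next I would apply, on the ball (resp. half-ball) $D:=\Omega\cap B_{tR}$, the scaled interpolation inequality $\|D^j u\|_{L^p(D)}\le\delta\|D^{2m}u\|_{L^p(D)}+C\delta^{-j/(2m-j)}\|u\|_{L^p(D)}$ for $0\le j\le 2m-1$, whose constant is independent of the radius of $D$. Choosing $\delta$ suitably for each $j$ (so that the $\|D^{2m}u\|_{L^p(D)}$-contribution carries a prescribed small factor) and summing, I obtain, writing $F(\tau):=\|u\|_{W^{2m,p}(\Omega\cap B_{\tau R})}$, which is finite because $u\in W^{2m,p}(\Omega)$,
\begin{equation*}
F(s)\le\tfrac12 F(t)+\frac{A}{(t-s)^{2m}}+B\qquad(\sigma\le s<t\le 1),
\end{equation*}
with $A=C'\|u\|_{L^p(\Omega)}$, $B=C'\|g\|_{L^p(\Omega)}$ and $C'$ depending only on the data in the statement. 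The conclusion then follows from the standard iteration lemma: any bounded $F:[\sigma,1]\to[0,\infty)$ obeying this inequality satisfies $F(\sigma)\le c(m)\bigl[(1-\sigma)^{-2m}A+B\bigr]$.

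The argument is routine. Two points call for a little care: first, in case~(ii) the boundary $\partial\Omega$ has a corner, but this is harmless because $\eta$ annihilates $u$ near the spherical part of $\partial\Omega$, so the global estimate on $\Omega^{*}$ only ever sees the flat part, on which $w$ has homogeneous Dirichlet data; second, getting the sharp exponent in $(1-\sigma)^{-2m}$ — rather than merely finiteness of the local norm — is precisely what forces the iteration over the family $\{\Omega\cap B_{\tau R}\}_{\sigma\le\tau\le1}$ in place of a single cut-off. I expect the bookkeeping in these two points to be the only non-mechanical part.
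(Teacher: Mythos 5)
Your argument is correct and follows the same overall strategy as the paper: localize the global Agmon--Douglis--Nirenberg estimate with a concentric cut-off, estimate the commutator $[L,\eta]u$ by lower-order derivatives of $u$ with coefficients growing like inverse powers of the annulus width, and then remove the intermediate derivatives by the scaled interpolation inequality. The only genuine divergence is the absorption device. The paper introduces the weighted suprema $\Phi_k=\sup_{0<\sigma<1}(1-\sigma)^k\|\nabla^k u\|_{L^p(\Omega\cap B_\sigma)}$, so that the family of estimates over all radii collapses into a single finite system $\Phi_{2m}\le C\bigl(\|g\|_{L^p}+\sum_{k<2m}\Phi_k\bigr)$ and $\Phi_k\le\eps\Phi_{2m}+C_\eps\Phi_0$, which is then solved algebraically; you instead choose the interpolation parameter radius by radius to obtain $F(s)\le\frac12F(t)+A(t-s)^{-2m}+B$ and invoke the two-radius Giaquinta-type iteration lemma. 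Both devices are standard and interchangeable here, both rely on the a priori finiteness $u\in W^{2m,p}(\Omega)$, and both deliver the same sharp power $(1-\sigma)^{-2m}$. One small place where your write-up is actually more careful than the paper's: in case~(ii) the half-ball has a corner and is not a $C^{2m}$ domain, so Theorem~\ref{adn_global} does not apply to it directly; your construction of an auxiliary $C^{2m}$ domain $\Omega^*$ that agrees with the half-ball on a neighbourhood of $\supp\eta$, together with extension of $w$, $a_{ij}$, $b_\alpha$, makes this step rigorous, whereas the paper applies Theorem~\ref{adn_global} without comment.
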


\begin{proof} It is sufficient to prove the result for $R=1$. For 
$\sigma\in (0,1)$ let $\eta\in C_0^{2m}(B_1)$ be a cut-off 
function with $0\leq \eta \leq 1$, $\eta\equiv 1$ in $B_{\sigma}$, 
$\eta\equiv 0$ for $|x|\geq \sigma'$ where $\sigma'=\frac{1+\sigma}{2}$ and 
$|D^\gamma\eta|\leq \Big(\frac{4}{1-\sigma}\Big)^{|\gamma|}$ for 
$|\gamma|\leq 2m$. Then 
$$
L(u\eta) = g\eta + \sum_{\substack{|\beta|\leq 2m-1 \\ 
|\gamma|\leq 2m-|\beta|}} 
c_{\beta,\gamma}(x) D^\beta u D^\gamma \eta \mbox{ in }\Omega,
$$
where $c_{\beta,\gamma}$ are 
bounded functions with $\|c_{\beta,\gamma}\|_\infty\leq C_1$ and $C_1=C(m)
\max\{\|b_\alpha\|_\infty, \|a_{ij}\|_{C^{2m-2}}\}$. By  Theorem \ref{adn_global} 
\begin{align*}
\|\nabla^{2m} u\|_{L^p(\Omega\cap B_{\sigma})} &
\leq C_2\Big(\|g\|_{L^p(\Omega)}+
\sum_{\substack{ 0\leq k\leq 2m-1 \\ 0\leq l\leq 2m-k}} 
\|\nabla^k u\|_{L^p(\Omega\cap B_{\sigma'})}
(1-\sigma)^{-l}\Big) \\
& \leq C_3\Big(\|g\|_{L^p(\Omega)}+ \sum_{k=0}^{2m-1} 
\|\nabla^k u\|_{L^p(\Omega\cap B_{\sigma'})} (1-\sigma)^{k-2m}\Big).
\end{align*}
If we introduce for $k\in \N_0$ the weighted norm $\Phi_k = 
\sup_{0<\sigma<1} (1-\sigma)^k\|\nabla^k u\|_{L^p(\Omega\cap B_\sigma)}$ 
then the last inequality implies 
\begin{equation}
\Phi_{2m} \leq C_3\Big(\|g\|_{L^p(\Omega)}+\sum_{k=0}^{2m-1} \Phi_k\Big).
\label{Phi}
\end{equation}
Recall the standard interpolation inequality, see Adams, Fournier \cite{AF}, 
for $0\leq k\leq 2m-1$
$$
\|\nabla^k u\|_{L^p(\Omega\cap B_\sigma)}\leq 
\epsilon \|\nabla^{2m} u\|_{L^p(\Omega\cap B_\sigma)}+ 
C \epsilon^\frac{-k}{2m-k} \|u\|_{L^p(\Omega\cap B_\sigma)},
$$ 
where $C$ is homothety invariant and hence independent of $\sigma$.
Using this we find that for every fixed 
$\delta>0$ there exists $\sigma(\delta)\in (0,1)$ such that 
\begin{align*}
\Phi_k & \leq (1-\sigma)^k\|\nabla^k u\|_{L^p(\Omega\cap B_\sigma)}+\delta \\ 
& \leq (1-\sigma)^k\Big((1-\sigma)^{2m-k}\epsilon 
\|\nabla^{2m} u\|_{L^p(\Omega\cap B_\sigma)} + C\epsilon^\frac{-k}{2m-k}
(1-\sigma)^{-k} \|u\|_{L^p(\Omega\cap B_\sigma)}\Big)+\delta\\
& = \epsilon (1-\sigma)^{2m} \|\nabla^{2m} u\|_{L^p(\Omega\cap B_\sigma)} + 
C \epsilon^\frac{-k}{2m-k} \|u\|_{L^p(\Omega\cap B_\sigma)}+\delta.
\end{align*}
Since $\delta>0$ was arbitrary, we see that $\Phi_k \leq \epsilon\Phi_{2m}+
C_\epsilon \Phi_0$. Hence it follows from \eqref{Phi} that 
$\Phi_{2m} \leq C_4(\|g\|_{L^p(\Omega)}+\|u\|_{L^p(\Omega)})$, i.e., 
$$
\|\nabla^{2m}u\|_{L^p(\Omega\cap B_\sigma)} \leq \frac{C_4}{(1-\sigma)^{2m}}
(\|g\|_{L^p(\Omega)}+ \|u\|_{L^p(\Omega)}).
$$
Using the interpolation inequality again we obtain the claim.
\end{proof}

\noindent
{\em Proof of Theorem \ref{main}.} It is convenient to rewrite the operator $L$ in the form 
$$
L=(-1)^m \sum_{|\alpha|=2m} a_\alpha(x)D^\alpha + \sum_{|\alpha|\leq 2m-1} c_\alpha(x)D^\alpha.
$$
Here $a_\alpha(x)= \sum \limits_{I \in \cM_\alpha}a_{i_1 i_2}(x)\cdot 
a_{i_3 i_4}(x)\cdots 
a_{i_{2m-1} i_{2m}}(x)$, where $\cM_\alpha$ is the set of all vectors 
$I=(i_1,\dots,i_{2m}) \in \{1,\dots,N\}^{2m}$ satisfying $\#
\{j\::\:i_j=l\}= \alpha_l$ for $l=1,\dots,N$. Hence $a_\alpha$ is continuous on $\overline{\Omega}$ and 
$c_\alpha\in L^\infty(\Omega)$. Assume for 
contradiction that there exists a sequence $u_k$ of solutions of \eqref{basic} 
with $M_k := \|u_k\|_\infty \to \infty$ as $k\to \infty$. By considering a 
suitable subsequence we can assume that there exists $x_k\in \Omega$ 
such that either $M_k=u_k(x_k)$ for all $k\in\N$ (positive blow-up) or 
$M_k = -u_k(x_k)$ for all $k\in \N$ (negative blow-up). Define 
$$
v_k(y) := \frac{1}{M_k} u_k(M_k^\frac{1-q}{2m} y+x_k).
$$
Then $\|v_k\|_\infty=1$ and either $v_k(0)=1$ for all $k\in\N$ 
(positive blow-up) or $v_k(0)=-1$ for all $k\in\N$ (negative blow-up). We may 
also assume that $x_k\to \bar x\in \overline{\Omega}$. 

\smallskip

\noindent
\underline{Case 1:} $\bar x\in \Omega$. In this case $v_k$ is well-defined on 
the sequence of balls $B_{\rho_k}(0)$ with $\rho_k:= M_k^\frac{q-1}{2m}
\dist(x_k,\partial\Omega)\to \infty$ as $k\to \infty$. Note that 
$$
D^\alpha v_k(y) = M_k^{\frac{1-q}{2m}|\alpha|-1}
(D^\alpha u_k)(M_k^\frac{1-q}{2m} y+x_k). 
$$
For $y\in B_{\rho_k}(0)$ let
\begin{equation}
\bar a_\alpha^k(y) := a_\alpha(M_k^\frac{1-q}{2m} y+x_k), \quad 
\bar c_\alpha^k(y) := M_k^{(q-1)(\frac{|\alpha|}{2m}-1)} 
c_\alpha(M_k^\frac{1-q}{2m} y+x_k)
\label{def_coeff}
\end{equation}
and define the operator
\begin{equation}
\bar L^k :=(-1)^m \sum_{|\alpha|=2m} \bar a_\alpha^k(y) D^\alpha
+\sum_{|\alpha|\leq 2m-1} \bar c_\alpha^k(y) D^\alpha.
\label{def_op}
\end{equation}
The function $v_k$ satisfies
\begin{equation}
\bar L^k v_k(y) = f_k(y) \mbox{ in } B_{\rho_k}(0), \quad \mbox{ where } 
f_k(y) := \frac{1}{M_k^q} f(M_k^\frac{1-q}{2m} y+x_k, M_k v_k(y)). 
\label{eq_vk}
\end{equation}
By our assumption on the nonlinearity $f(x,s)$ we have that $\|f_k\|_{L^\infty(B_{\rho_k}(0))}$ 
is bounded in $k$. Note that while the ellipticity constant and the 
$L^\infty$-norm of the coefficients of $\bar L^k$ are the same as for $L$, 
the modulus of continuity of $\bar a_\alpha^k$ is smaller than that of $a_\alpha$. 
By applying Corollary~\ref{adn_local} on the ball 
$B_R(0)$ for any $R>0$ and any $p\geq 1$ there exists a constant $C_{p,R}>0$ 
such that 
$$
\|v_k\|_{W^{2m,p}(B_R(0))} \leq C_{p,R} \mbox{ uniformly in } k.
$$
For large enough $p$ we may extract a subsequence (again denoted $v_k$) such 
that $v_k \to v$ in $C^{2m-1,\alpha}(B_R(0))$ as $k\to \infty$ for every 
$R>0$, where $v\in C^{2m-1,\alpha}_{loc}(\R^N)$ is bounded with 
$\|v\|_\infty=1=\pm v(0)$. Taking yet another subsequence we may assume that 
$f_k\overset{\ast}{\rightharpoonup} F$ in $L^\infty(K)$ as $k\to \infty$ for every compact set 
$K\subset \R^N$. Also we see that 
\begin{equation}
F(y) = \left\{\begin{array}{ll}
h(\bar x)v(y)^q & \mbox{ if } v(y)>0,\vspace{\jot}\\
k(\bar x)|v(y)|^q & \mbox{ if } v(y)<0,
\end{array} \right.
\label{def_F}
\end{equation}
because, e.g., if $v(y)>0$ then there exists $k_0$ such that $v_k(y)>0$ for $k\geq k_0$ and hence 
$M_kv_k(y)\to \infty$ as $k\to \infty$. Therefore the assumption on $f(x,s)$ implies that 
$f_k(y)\to h(\bar x) v(y)^q$ as $k\to \infty$, and a similar pointwise convergence holds at 
points where $v(y)<0$.  Finally, note that the pointwise convergence of 
$f_k$ on the set $Z^+=\{y\in \R^N: v(y)>0\}$ and $Z^-=\{y\in \R^N:v(y)<0\}$ determine due to 
the dominated convergence theorem the 
weak$\ast$-limit $F$ of $f_k$ on the set $Z^+\cup Z^-$. Since $\bar b_\alpha^k(y)\to 0$ and 
$\bar a^k_\alpha(y)\to a_\alpha(\bar x)$ as $k\to\infty$ and since we may assume that 
$v_k\to v$ in $W^{m,p}_{loc}(\R^N)$ we find that $v$ is a bounded, 
weak $W^{m,p}_{loc}(\R^N)$-solution of 
\begin{equation} 
{\mathcal L} v= F \mbox{ in } \R^N, \quad \mbox{ where} \quad 
{\mathcal L} = (-1)^m \sum_{|\alpha|=2m} a_\alpha(\bar x) D^\alpha
= \Bigl(-\sum_{i,j=1}^N a_{ij}(\bar x)\frac{\partial^2}{\partial y_i\partial y_j} \Bigr)^m. 
\label{eq:whole_space}
\end{equation}
Since $F\in L^\infty(\R^N)$ we get that 
$v\in W^{2m,p}_{loc}(\R^N)\cap C^{2m-1,\alpha}_{loc}(\R^N)$ is a bounded, 
strong solution of \eqref{eq:whole_space}. Because $D^{2m} v = 0 $ a.e. on the set 
$\{y\in \R^N: v(y)=0\}$ we see that $v$ is a strong solution of 
$$
{\mathcal L} v=
\left\{\begin{array}{ll}
h(\bar x)v(y)^q & \mbox{ if } v(y)>0,\vspace{\jot}\\
0 & \mbox{ if } v(y) =0, \vspace{\jot}\\
k(\bar x)|v(y)|^q & \mbox{ if } v(y)<0
\end{array}\right.
$$
in $\R^N$. Notice that the right-hand side of the equation is $C^1(\R^N)$. Hence $v$ is a classical 
$C^{2m,\alpha}_{loc}(\R^N)$ solution. By a linear change of variables we may assume that $v$ solves 
\begin{equation}
(-\Delta)^m v = g(v) \mbox{ in } \R^N, \quad \mbox{ where } 
g(s) = \left\{\begin{array}{ll}
h(\bar x)s^q & \mbox{ if } s \geq  0,\vspace{\jot}\\
k(\bar x)|s|^q & \mbox{ if } s\leq 0.
\end{array}\right.
\label{def_g}
\end{equation}
By Lemma \ref{averages} of the Appendix we find that $v\geq 0$. This already excludes negative 
blow-up and implies that $g(v(y))=h(\bar x)v(y)^q$, $v(0)=1$. Theorem \ref{rn} tells us that this is 
impossible. This finishes the contradiction argument in the first case.

\medskip

\noindent
\underline{Case 2:} $\bar x\in \partial \Omega$. By flattening the boundary through a local 
change of coordinates we may assume that near $\bar x=0$ the boundary is contained 
in the hyperplane $x_1=0$, and that $x_1>0$ corresponds to points inside $\Omega$. Since 
$\partial\Omega$ is locally a $C^{2m}$-manifold, this change of coordinates transforms the 
operator $L$ into a similar operator which satisfies the same hypotheses as $L$. For simplicity we 
call the transformed variables $x$ and the transformed operator $L$. Now the 
function $v_k$ is well-defined on the set 
$B_{\rho_k}(0)\cap\{y\in \R^N: y_1> -M_k^\frac{q-1}{2m} x_{k,1}\}$. Since 
$$
1 = |\underbrace{v_k(0)}_{=\pm 1}-\underbrace{v_k(-M_k^\frac{q-1}{2m}x_{k,1},0,\ldots,0)}_{=0}| 
\leq M_k^\frac{q-1}{2m}x_{k,1} \|\nabla v_k\|_\infty
$$
we see that either $M_k^\frac{q-1}{2m}x_{k,1}$ is unbounded and we can conclude as in Case 1, 
or (by extracting a subsequence) $\tau_k := M_k^\frac{q-1}{2m}x_{k,1}\to \tau>0$ as $k\to \infty$. 
In this case we make a further change of coordinates and define 
\begin{align*}
w_k(z) &:= v_k(z_1-\tau_k,z_2,\ldots,z_N),\\ 
\tilde a_\alpha^k(z) &:= \bar a_\alpha^k(z_1-\tau_k,z_2,\ldots,z_N),\\
\tilde c_\alpha^k(z) &:= \bar c_\alpha^k(z_1-\tau_k,z_2,\ldots,z_N)
\end{align*}
and likewise the operator $\tilde L^k$. Note that $w_k(\tau_k,0,\ldots,0)=\pm 1$.  
Let $\R^N_+=\{z\in \R^N:z_1>0\}$ and $B_R^+ = B_R(0)\cap \R^N_+$ for $R>0$. For 
$k$ sufficiently large the  coefficients $\tilde a_\alpha^k$, $\tilde c_\alpha^k$ 
and the operator $\tilde L^k$ are well-defined in $B_R^+$. As before 
$w_k$ satisfies 
$$
\tilde L^k w_k(z) = \tilde f_k(z) \mbox{ in } B_R^+, \quad \mbox{ where } 
\tilde f_k(z) := \frac{1}{M_k^q} f(M_k^\frac{1-q}{2m}z+(0,x_{k,2},\ldots,x_{k,n}), M_k w_k(z)). 
$$
together with Dirichlet-boundary conditions on $\{z\in \R^N:|z|<R, z_1=0\}$. Hence we may 
apply Corollary~\ref{adn_local} on the half-ball 
$B_R^+$ for any $R>0$ and find that for any $p\geq 1$ there exists a constant $C_{p,R}>0$ 
such that 
$$
\|w_k\|_{W^{2m,p}(B_R^+)} \leq C_{p,R} \mbox{ uniformly in } k.
$$
As in Case 1 we can extract convergent subsequences $w_k\to w$ in 
$C^{2m-1,\alpha}_{loc}(\overline{\R^N_+})$ and 
$f_k\overset{\ast}{\rightharpoonup} F$ in $L^\infty(\R^N_+)$ as $k\to \infty$, 
where $F\geq 0, \not \equiv 0$ is determined in the same way as in Case 1. This time, $w$ is a bounded, 
strong $W^{2m,p}_{loc}(\R^N_+)\cap 
C^{2m-1,\alpha}_{loc}(\overline{\R^N_+})$-solution of 
$$
{\mathcal L} w = F \mbox{ in } \R^N_+, \qquad 
\frac{\partial}{\partial z_1}w=\ldots=
\frac{\partial^{m-1}}{\partial z_1^{m-1}} w =0 \mbox{ on }\partial \R^N_+
$$
with ${\mathcal L}$ as in \eqref{eq:whole_space}. 
By a linear change of variables we may assume that $w$ solves 
\begin{equation}
(-\Delta)^m w = g(w) \mbox{ in } \R^N_+, \qquad \frac{\partial}{\partial z_1}w=\ldots=
\frac{\partial^{m-1}}{\partial z_1^{m-1}} w =0 \mbox{ on }\partial \R^N_+,
\label{eq_halfspace}
\end{equation}
where $g$ is defined as in \eqref{def_g} of Case 1.
The representation formula of Theorem \ref{green-poiss-form_plus} shows that $w$ is positive 
and that $g(w(z))=h(\bar x)w(z)^q$. Therefore $w$ is a positive, bounded and classical solution 
$C^{2m}$-solution of $(-\Delta)^m w= h(\bar x) w^q$ in $\R^N_+$ with Dirichlet 
boundary conditions and $w(0)=1$. A contradiction is reached by Theorem~\ref{rn_plus}. \qed

\medskip

\noindent
{\em Proof of Remark \ref{remark_to_main}.} Take sequences of solutions $(u_k,\lambda_k)$ such that 
$M_k := \|u_k\|_\infty\to \infty$ as $k\to \infty$, $\lambda_k\geq \lambda_0>0$ and define the 
rescaled functions
$$
v_k(y) := \frac{1}{M_k} u_k\left(\Big(\frac{M_k^{1-q}}{\lambda_k}\Big)^{1/2m} y+x_k\right).
$$
Due to the assumption $\lambda_k\geq \lambda_0>0$ one has that $M_k^{1-q}/\lambda_k\to 0$ as 
$k\to\infty$. Define further 
\begin{align*}
\bar a_\alpha^k(y) &:= a_\alpha\left(\Big(\frac{M_k^{1-q}}{\lambda_k}\Big)^{1/2m} y+x_k\right), \\
\bar c_\alpha^k(y) &:= M_k^{(q-1)(\frac{|\alpha|}{2m}-1)}\lambda_k^{\frac{|\alpha|}{2m}-1} 
c_\alpha\left(\Big(\frac{M_k^{1-q}}{\lambda_k}\Big)^{1/2m} y+x_k\right)
\end{align*}
with the corresponding operator $\bar L^k$. Then $v_k$ satisfies
$$
\bar L^k v_k(y) = f_k(y) \quad \mbox{ where } 
f_k(y) := \frac{1}{M_k^q\lambda_k} f\left(\Big(\frac{M_k^{1-q}}{\lambda_k}\Big)^{1/2m} y+x_k, 
M_k v_k(y)\right).
$$
Note that $\lim_{k\to\infty} f_k(y)= h(\bar x) v(y)^q$ on $Z^+$ and similarly on $Z^-$. The rest of 
the proof is as before.
\qed


\section{Green representation}
\label{sec:representation}

The main result of this section is Theorem \ref{green-poiss-form_plus}. There we state 
conditions on a function $u$ on the half-space $\R^N_+$ under which the Green representation 
formula 
$$
u(x)= \int_{\R^N_+} G_\infty^+(x,y) (-\Delta)^m u(y)\,dy \mbox{ for all } x \in \R^N_+
$$
holds. Here $G_\infty^+$ is the half-space Green function, see
\eqref{eq:half-space-green-function} below. In the next section, in
the proof of Theorem \ref{rn_plus}, this representation formula will be applied
to solutions of \eqref{eq:liouville-half-space}. 

\medskip

Let us fix some notation. We recall Boggio's celebrated formula \cite{BO} for
the Green function of the operator $(-\Delta)^m$ with Dirichlet boundary
conditions on the unit ball $\B=\{x\in\R^N: |x|<1\}$: 
\begin{align*}
G_1(x,y) &= k_N^m |x-y|^{2m-N} \int_1^{(\psi(x,y)+1)^{1/2}} \frac{(z^2-1)^{m-1}}{z^{N-1}}\,dz \\
&= \frac{k_N^m}{2}|x-y|^{2m-N}\int_0^{\psi(x,y)}
\frac{z^{m-1}}{(z+1)^{N/2}}\,dz \; \mbox{ with } \; 
\psi(x,y)=\frac{(1-|x|^2)(1-|y|^2)}{|x-y|^2}
\end{align*}
for $x,y\in \B$. Here $k_N^m$ is a suitable normalization constant. 
By dilation we find the Green function for the ball $B_R=\{x\in \R^N:|x|<R\}$ 
as follows
\begin{align*}
G_R(x,y) &= R^{2m-N}G_1\left(\frac{x}{R},\frac{y}{R}\right)\\
&=\frac{k_N^m}{2}|x-y|^{2m-N}\int_0^{\psi_R(x,y)}
\frac{z^{m-1}}{(z+1)^{N/2}}\,dz \; \mbox{ with }\;
\psi_R(x,y)=\frac{(R^2-|x|^2)(R^2-|y|^2)}{R^2|x-y|^2}.
\end{align*}
Next we set $P_R:=(R,0,\dots,0) \in \R^N_+$ and we denote by  
$B_R^+:= \{x \in \R^N\::\: |x-P_R|<R\}$ the ball of radius $R$ shifted by $P_R$. If we let 
$G_R^+$ denote the Green function on $B_R^+$ with respect to
Dirichlet boundary conditions then we find the explicit formula
$$
G_R^+(x,y) = R^{2m-N}G_1\left(\frac{x-P_R}{R},\frac{y-P_R}{R}\right) 
= \frac{k_N^m}{2}|x-y|^{2m-N}\int_0^{\psi^+_R(x,y)}
\frac{z^{m-1}}{(z+1)^{N/2}}\,dz
$$
with 
$$ 
\psi_R^+(x,y) =\frac{(R^2-|x-P_R|^2)(R^2-|y-P_R|^2)}{R^2|x-y|^2}, \qquad x,y\in
B_R. 
$$
Finally, if we let $G^+_\infty$ denote the Green function of the operator
$(-\Delta)^m$ on the half-space $\R^N_+$ subject to Dirichlet boundary
conditions then  
\begin{equation}
  \label{eq:half-space-green-function}
G_\infty^+(x,y) = \frac{k_N^m}{2}|x-y|^{2m-N}\int_0^{\psi_\infty(x,y)}
\frac{z^{m-1}}{(z+1)^{N/2}}\,dz \quad \mbox{ with } \quad \psi_\infty(x,y)
=\frac{4x_1y_1}{|x-y|^2}
\end{equation}
for $x,y\in \R^N_+$.

\begin{lemma}
The Green function $G_R^+$ on $B_R^+$ 
converges pointwise and monotonically to the Green function $G_\infty^+$ on $\R^N_+$. 
\label{monotone}
\end{lemma}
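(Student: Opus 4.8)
The plan is to read everything off Boggio's explicit formula. Both $G_R^+(x,y)$ and $G_\infty^+(x,y)$ have the shape $\frac{k_N^m}{2}\,|x-y|^{2m-N}\,\Phi\big(\Psi(x,y)\big)$, where $\Phi(s)=\int_0^s \frac{z^{m-1}}{(z+1)^{N/2}}\,dz$ is continuous and non-decreasing on $[0,\infty)$ because the integrand is non-negative there, and where the only dependence on the domain enters through the argument: $\Psi=\psi_R^+$ for the ball $B_R^+$ and $\Psi=\psi_\infty$ for the half-space, while the prefactor $\frac{k_N^m}{2}|x-y|^{2m-N}$ is the same in all cases. Hence the whole lemma reduces to the claim that, for each fixed $x,y\in\R^N_+$ with $x\neq y$,
\[
\psi_R^+(x,y)\ \nearrow\ \psi_\infty(x,y)\qquad\text{as }R\to\infty ;
\]
applying the monotone continuous map $\Phi$ and multiplying by the $R$-independent prefactor then yields $G_R^+(x,y)\nearrow G_\infty^+(x,y)$.

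To study $\psi_R^+$ I would first simplify its numerator. Expanding $|x-P_R|^2=|x|^2-2x_1R+R^2$ gives $R^2-|x-P_R|^2=2x_1R-|x|^2$, and likewise $R^2-|y-P_R|^2=2y_1R-|y|^2$, so that
\[
\psi_R^+(x,y)=\frac{(2x_1R-|x|^2)(2y_1R-|y|^2)}{R^2\,|x-y|^2}
\]
for every $R>R_0:=\max\{|x|^2/(2x_1),\,|y|^2/(2y_1)\}$; this range of $R$ is precisely the set of radii for which $x,y\in B_R^+$, and there the numerator is positive. Expanding in powers of $1/R$ one reads off $\psi_R^+(x,y)\to 4x_1y_1/|x-y|^2=\psi_\infty(x,y)$, which gives the convergence.

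For the monotonicity I would put $t=1/R$ and show that $t\mapsto (2x_1-|x|^2 t)(2y_1-|y|^2 t)$ is strictly decreasing on the interval $(0,1/R_0)$, so that $R\mapsto \psi_R^+(x,y)$ is increasing on $(R_0,\infty)$. Its derivative equals $2|x|^2|y|^2\,t-2\big(x_1|y|^2+y_1|x|^2\big)$, which is negative exactly when $t< \frac{x_1}{|x|^2}+\frac{y_1}{|y|^2}$; and indeed $1/R_0=\min\{2x_1/|x|^2,\,2y_1/|y|^2\}\le \frac{x_1}{|x|^2}+\frac{y_1}{|y|^2}$, since $2\min\{a,b\}\le a+b$ for all $a,b\ge 0$. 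Combining this with the limit computed above yields $\psi_R^+(x,y)\nearrow\psi_\infty(x,y)$, and the lemma follows. The only mildly non-routine point is this last elementary inequality: it guarantees that as soon as $R$ is large enough for $B_R^+$ to contain both points $x$ and $y$, the parameter has already passed the vertex of the relevant parabola, so no overshoot can occur; the rest is bookkeeping.
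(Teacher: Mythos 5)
Your proof is correct and follows essentially the same strategy as the paper: both reduce the lemma to checking that $\psi_R^+(x,y)$ increases to $\psi_\infty(x,y)$ as $R\to\infty$ and verify this by a direct sign computation on the derivative. The only difference is cosmetic — you first simplify $R^2-|x-P_R|^2=2x_1R-|x|^2$ and substitute $t=1/R$, which turns $\psi_R^+$ into a quadratic whose vertex you locate, whereas the paper differentiates the unsimplified expression in $R$ and, after introducing $a=(x-P_R)/R$, $b=(y-P_R)/R$, recognizes the result as a sum of manifestly non-negative terms of the form $\tfrac12\,|(a_1+1,a_2,\dots,a_N)|^2(1-|b|^2)$; your version is arguably the cleaner of the two.
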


\begin{proof} The pointwise convergence is easily checked. Let $x,y\in B_R^+$. The
  monotonicity of $G_R^+(x,y)$ with respect to $R$ is equivalent to the
  monotonicity of  $\psi_R^+(x,y)$ with respect to $R$. Thus 
\begin{multline*}
\frac{d}{dR} \frac{(R^2-|x-P_R|^2)(R^2-|y-P_R|^2)}{R^2} \\
= -\frac{2}{R^3}
(R^2-|x-P_R|^2)(R^2-|y-P_R|^2)
+\frac{2x_1}{R^2}(R^2-|y-P_R|^2)+\frac{2y_1}{R^2}(R^2-|x-P_R|^2). 
\end{multline*}
Setting $a :=(x-P_R)/R$, $b:=(y-P_R)/R$ we have $|a|^2,|b|^2\leq 1$ and we
obtain from the previous computation
\begin{multline*}
\frac{d}{dR} \frac{(R^2-|x-P_R|^2)(R^2-|y-P_R|^2)}{R^2} \\
 = 2R\Big(-(1-|a|^2)(1-|b|^2)+(1+a_1)(1-|b|^2)+(1+b_1)(1-|a|^2)\Big)\\
 = R\Big(
(\underbrace{a_1+\frac{1}{2}+\frac{1}{2}|a|^2}_{\frac{1}{2}(|a|^2+2a_1+1)})
(1-|b|^2)+(\underbrace{b_1+\frac{1}{2}+\frac{1}{2}|b|^2}_{\frac{1}{2}
(|b|^2+2b_1+1)})(1-|a|^2)\Big),
\end{multline*}
and clearly $|a|^2+2a_1+1=|(a_1+1,a_2,\ldots,a_N)|^2\geq 0$. This establishes
the proof. 
\end{proof}

In \cite{GrSw}, Lemma 3.4, Grunau and Sweers proved the
following estimates for the polyharmonic Green function $G_1$ on the unit ball
if $|k|\geq m$ and $x\in \B$, $y\in \partial \B$:  
\begin{equation}
|D_y^k G_1(x,y)| \leq C_{k,N,m} |x-y|^{m-N-|k|}(1-|x|)^m
\label{grsw}
\end{equation}
for some constant $C_{k,N,m}>0$. For the Green function $G_R$ on $B_R$ and $G_R^+$ on $B_R^+$ 
the estimate \eqref{grsw} transforms as follows: 
\begin{equation}
|D_y^k G_R(x,y)| \leq C_{k,N,m} |x-y|^{m-N-|k|}(R-|x|)^m
\label{grsw_trans}
\end{equation}
if $x\in B_R, y \in \partial B_R$. Likewise, 
\begin{equation}
|D_y^k G_R^+(x,y)| \leq C_{k,N,m} |x-y|^{m-N-|k|} |x|^m 
\label{grsw_trans_plus}
\end{equation}
if $x=(x_1,0,\ldots,0)\in B_R^+$ with $x_1\in (0,R), y\in \partial B_R^+$.

\begin{lemma} Let $G$ be the Green function of $(-\Delta)^m$ with
  Dirichlet boundary condition on an arbitrary ball $B\subset\R^n$ with
  exterior unit normal $\nu$ on $\partial B$. For
  any function $v\in C^{2m-1}(\overline{B})\cap W^{2m,p}(B)$ with $p>\frac{N}{2m}$ one has 
  the following Poisson-Green representation for $x \in B$: for $m$ even
\begin{align}
v(x)=& \sum_{i=1}^{m/2} \oint_{\partial B}\Bigl( \Delta^{i-1} v(y)
\partial_{\nu_y}\Delta_y^{m-i}G(x,y) - \Delta_y^{m-i} G(x,y)
\partial_{\nu_y}\Delta^{i-1}v(y)\Bigr)\,ds_y \label{eq:green-poisson-even}
\\
&+\int_B G(x,y) (-\Delta)^{m}v(y)\,dy.\nonumber    
\end{align}
and for $m$ odd 
\begin{align}
v(x)=& -\sum_{i=1}^{(m-1)/2} \oint_{\partial B}\Bigl( \Delta^{i-1} v(y)
\partial_{\nu_y}\Delta_y^{m-i}G(x,y) - \Delta_y^{m-i} G(x,y)
\partial_{\nu_y}\Delta^{i-1}v(y) \Bigr)\,ds_y \label{eq:green-poisson-odd}\\
&-\oint_{\partial B
}\Delta^{(m-1)/2}v(y) \partial_{\nu_y}\Delta_y^{(m-1)/2}G(x,y)\,ds_y
+ \int_B G(x,y) (-\Delta)^{m}v(y)\,dy. \nonumber
\end{align}
\end{lemma}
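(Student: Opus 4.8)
The plan is to read the formula off from the iterated second Green identity for $\Delta$, combined with the Dirichlet boundary behaviour of $G(x,\cdot)$, and then to relax the regularity of $v$ by mollification. Write $L:=-\Delta$ and $w:=G(x,\cdot)$, and assume first $v\in C^{2m}(\overline B)$. The elementary identity
\[
v\,L^{m}w-(L^{m}v)\,w=\sum_{j=0}^{m-1}\Bigl[(L^{j}v)\,L^{\,m-j}w-(L^{j+1}v)\,L^{\,m-1-j}w\Bigr]
\]
telescopes, and its $j$-th summand equals $-(a_{j}\,\Delta b_{j}-b_{j}\,\Delta a_{j})$ with $a_{j}:=L^{j}v$ and $b_{j}:=L^{\,m-1-j}w$. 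Integrating over $B\setminus\overline{B_{\eps}(x)}$, where $w$ is smooth and $L^{m}w=0$, applying the classical identity $\int(a\,\Delta b-b\,\Delta a)=\oint(a\,\partial_{\nu}b-b\,\partial_{\nu}a)$ to every summand, and letting $\eps\to0$, the surface integrals on the inner sphere $\partial B_{\eps}(x)$ contribute the value $v(x)$ — this is the fundamental-solution part, coming from the summand $j=0$, where $b_{0}=(-\Delta)^{m-1}G(x,\cdot)$ is a fundamental solution of $-\Delta$ at $x$ up to a smooth remainder (its normalization encoded in $k_{N}^{m}$), exactly as in the classical derivation of the Green representation for $-\Delta$ — so one obtains
\[
v(x)=\int_{B}G(x,y)\,(-\Delta)^{m}v(y)\,dy-\sum_{j=0}^{m-1}\oint_{\partial B}\bigl(a_{j}\,\partial_{\nu_{y}}b_{j}-b_{j}\,\partial_{\nu_{y}}a_{j}\bigr)\,ds_{y}.
\]

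Next I would use the boundary conditions to discard most of the surface integrals. Since $G(x,\cdot)$ is smooth up to $\partial B$ and $G(x,\cdot)=\partial_{\nu}G(x,\cdot)=\cdots=\partial_{\nu}^{m-1}G(x,\cdot)=0$ there, all partial derivatives of $G(x,\cdot)$ of order $\le m-1$ vanish on $\partial B$. As $b_{j}=L^{\,m-1-j}w$ involves derivatives of $w$ of order $2(m-1-j)$ and $\partial_{\nu}b_{j}$ of order $2(m-1-j)+1$, it follows that $b_{j}\equiv0$ on $\partial B$ as soon as $j\ge\tfrac{m-1}{2}$, and $\partial_{\nu}b_{j}\equiv0$ on $\partial B$ as soon as $j\ge\tfrac{m}{2}$. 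Hence the $j$-th boundary term vanishes identically for $j\ge\lceil m/2\rceil$; and when $m$ is odd the single term $j=\tfrac{m-1}{2}$ collapses to $\oint_{\partial B}(L^{(m-1)/2}v)\,\partial_{\nu_{y}}L^{(m-1)/2}w\,ds_{y}$. Re-indexing the surviving terms by $i=j+1$ and writing each $L^{k}$ as $(-1)^{k}\Delta^{k}$ — which multiplies every bracket by $(-1)^{m-1}$ and, combined with the leading minus sign, gives the overall factor $(-1)^{m}$ — turns the displayed identity into \eqref{eq:green-poisson-even} when $m$ is even ($m/2$ full terms, no collapsed term) and into \eqref{eq:green-poisson-odd} when $m$ is odd ($(m-1)/2$ full terms together with the collapsed term $-\oint_{\partial B}\Delta^{(m-1)/2}v\,\partial_{\nu_{y}}\Delta_{y}^{(m-1)/2}G\,ds_{y}$).

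Finally I would remove the artificial smoothness. Extend $v$ across $\partial B$ and mollify to get $v_{n}\in C^{\infty}(\overline B)$ with $v_{n}\to v$ both in $C^{2m-1}(\overline B)$ and in $W^{2m,p}(B)$. In the boundary integrals the $v$-factors carry at most $2m-1$ derivatives and are integrated against fixed smooth functions of $y$ (here $x\in B$ stays away from $\partial B$), so they pass to the limit, and $v_{n}(x)\to v(x)$ by uniform convergence. For the volume term the decisive point is that $G(x,\cdot)\in L^{p'}(B)$ with $p'=\tfrac{p}{p-1}$; this is exactly where $p>\tfrac{N}{2m}$ is used, via $G(x,y)\le C|x-y|^{2m-N}$ and $|x-\cdot|^{2m-N}\in L^{p'}(B)\Leftrightarrow p'<\tfrac{N}{N-2m}\Leftrightarrow p>\tfrac{N}{2m}$ (the cases $N\le 2m$ being immediate), so that $\int_{B}G(x,\cdot)(-\Delta)^{m}v_{n}\to\int_{B}G(x,\cdot)(-\Delta)^{m}v$ by Hölder's inequality; passing to the limit in the identity for $v_{n}$ yields the claim for $v$. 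The one genuinely delicate step is the limit on $\partial B_{\eps}(x)$: one must verify that precisely the $j=0$ summand reproduces $v(x)$ while all other contributions of the inner sphere vanish as $\eps\to0$ — routine but needing care — whereas the parity/sign bookkeeping and the $L^{p'}$-integrability of $G(x,\cdot)$ are straightforward.
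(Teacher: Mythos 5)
Your proof follows essentially the same route as the paper: the identical telescoping Green identity (the paper writes $\sum_{i=1}^m\operatorname{div}\bigl(\Delta^{i-1}v\,\nabla\Delta^{m-i}G-\Delta^{m-i}G\,\nabla\Delta^{i-1}v\bigr)=v\,\Delta^m G-G\,\Delta^m v$, which is your sum re-indexed and written in terms of $\Delta$ rather than $-\Delta$), followed by dropping the boundary terms that vanish because $D_y^\alpha G(x,y)=0$ on $\partial B$ for $|\alpha|\le m-1$, and an approximation argument hinging on $\int_B G(x,y)|h(y)|\,dy\le C\|h\|_{L^p(B)}$ for $p>N/2m$. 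Your handling of the singularity by integrating over $B\setminus\overline{B_\eps(x)}$ and letting $\eps\to 0$ is somewhat more explicit than the paper's terse ``integrate over $B$'', but the content, including the parity and sign bookkeeping, coincides.
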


\begin{proof} First assume $v\in C^{2m}(\overline{B})$. Consider the identity
\begin{align*}
\sum_{i=1}^m \divergenz\left(\Delta^{i-1}v \nabla
\Delta^{m-i}G-\Delta^{m-i}G\nabla \Delta^{i-1}v\right)
&= \sum_{i=1}^m
\left(\Delta^{i-1}v\Delta^{m-i+1}G-\Delta^{m-i}G\Delta^i
v\right)\\  
&= v\Delta^m G-G\Delta^m v \mbox{ in } B.
\end{align*}
If we integrate this identity over $B$ and take into account that
$D_y^\alpha G(x,y)=0$ for $|\alpha|\leq m-1$ and $x\in B, y\in
\partial B$ then we obtain the claim. For $v\in C^{2m-1}(\overline{B})\cap W^{2m,p}(B)$ we can 
argue by approximation and Lebesgue's dominated convergence theorem if we take into account that 
$\int_B G(x,y)|h(y)|\,dy \leq \const \|h\|_{L^p(B)}$ provided $h\in L^p(B)$ and $p>\frac{N}{2m}$. 
\end{proof}

\begin{theorem}
\label{green-poiss-form_plus}
Suppose that $u \in C^{2m-1}(\overline{\R^N_+})\cap W^{2m,p}_{loc}(\R^N_+)$, $p>\frac{N}{2m}$ 
is a function with the following properties:
\begin{itemize}
\item[(i)] $u$ and all partial derivatives of $u$ of order less than or equal
to $2m-1$ are bounded,
\item[(ii)] $u$ satisfies Dirichlet boundary conditions on $\partial \R^N_+$,
\item[(iii)] $(-\Delta)^{m}u\in L^p_{loc}(\R^N_+)$ is non-negative in $\R^N_+$.
\end{itemize}
Then 
\begin{equation}
u(x)= \int_{\R^N_+}G^+_\infty(x,y)(-\Delta)^mu(y)\:dy \qquad \text{for every $x
  \in \R^N_+$.}
\label{rep_plus}
\end{equation}
\end{theorem}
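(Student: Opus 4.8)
The strategy is to obtain \eqref{rep_plus} as a limit of the Poisson-Green representation on the growing balls $B_R^+$ centered at $P_R=(R,0,\dots,0)$, using Lemma~\ref{monotone} and the Grunau--Sweers boundary estimate \eqref{grsw_trans_plus}. First I would fix $x\in\R^N_+$ and choose $R$ large enough that $x\in B_R^+$. Applying the Poisson-Green representation (the previous lemma) to $v=u$ on $B=B_R^+$ gives $u(x)$ as a sum of boundary integrals over $\partial B_R^+$ plus the volume term $\int_{B_R^+}G_R^+(x,y)(-\Delta)^m u(y)\,dy$. By Lemma~\ref{monotone} the integrand $G_R^+(x,y)(-\Delta)^m u(y)$ is non-negative (hypothesis (iii)) and increases pointwise to $G_\infty^+(x,y)(-\Delta)^m u(y)$ as $R\to\infty$, so by the monotone convergence theorem the volume term converges to $\int_{\R^N_+}G_\infty^+(x,y)(-\Delta)^m u(y)\,dy$. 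Thus everything reduces to showing that the boundary integrals over $\partial B_R^+$ tend to $0$ as $R\to\infty$.

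The main obstacle — and the bulk of the work — is precisely this boundary-integral estimate. Each term in \eqref{eq:green-poisson-even}/\eqref{eq:green-poisson-odd} is of the form $\oint_{\partial B_R^+}\big(\Delta^{i-1}u(y)\,\partial_{\nu_y}\Delta_y^{m-i}G_R^+(x,y)-\Delta_y^{m-i}G_R^+(x,y)\,\partial_{\nu_y}\Delta^{i-1}u(y)\big)\,ds_y$ (plus the single extra term when $m$ is odd). On $\partial B_R^+$ the portion lying in $\{y_1=0\}$ contributes nothing, because there both $u$ and all its normal derivatives up to order $m-1$ vanish by hypothesis (ii), and likewise $G_R^+(x,\cdot)$ and its normal derivatives up to order $m-1$ vanish there; so only the spherical cap $\{|y-P_R|=R,\ y_1>0\}$ matters. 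On that cap hypothesis (i) bounds $|\Delta^{i-1}u|$ and $|\partial_{\nu_y}\Delta^{i-1}u|$ by a constant (these involve only derivatives of order $\le 2m-1$), while \eqref{grsw_trans_plus} bounds the relevant derivatives of $G_R^+(x,\cdot)$ of order $k$ (with $|k|=2m-2i+1\ge m$ and $|k|=2m-2i+2\ge m$, so the estimate applies since $2i\le m$) by $C|x-y|^{m-N-|k|}|x|^m$. Hence the $i$-th boundary term is bounded by
\[
C\,|x|^m\int_{\{|y-P_R|=R,\ y_1>0\}}|x-y|^{-(m+N-1)}\,ds_y.
\]
Since $x$ is fixed and $y$ ranges over a hemisphere of radius $R$, the worst decay of $|x-y|$ is $\gtrsim R$ only away from a small neighborhood of the ``north pole'' $2P_R=(2R,0,\dots,0)$; a direct computation splitting the cap into the region near $2P_R$ and its complement shows this surface integral is $O(R^{m+N-1}\cdot R^{-(m+N-1)}\cdot R^{-(N-1)}\cdot R^{N-1})$-type — more carefully, one parametrizes the cap, notes $|x-y|\ge c\,|y|$ for $|y|$ large with $y_1>0$ and $x$ fixed, and finds the integral is $O(R^{-m})$ (or at worst $O(1)$ with an explicit decaying factor), so that the whole boundary term is $O(|x|^m R^{-m})\to 0$. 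The extra odd-$m$ term $\oint\Delta^{(m-1)/2}u\,\partial_{\nu_y}\Delta_y^{(m-1)/2}G_R^+\,ds_y$ is handled the same way, with $|k|=m$ exactly, which is still covered by \eqref{grsw_trans_plus}.

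Two technical points deserve care. First, \eqref{grsw_trans_plus} as stated is for $x$ on the $x_1$-axis; for general $x\in\R^N_+$ one should either invoke the corresponding Grunau--Sweers estimate in its full form (with $(R-|x-P_R|)^m$, i.e. a boundary-distance factor, in place of $|x|^m$) or first translate so that $x$ lies on the axis of the relevant ball — since $x$ is fixed this is harmless, and the bound one really needs is just ``derivatives of $G_R^+(x,\cdot)$ of order $\ge m$ on $\partial B_R^+$ decay like $|x-y|^{m-N-|k|}$ times a constant depending on $x$ but not $R$.'' Second, the approximation clause in the previous lemma (passing from $C^{2m}(\overline{B_R^+})$ to $v=u\in C^{2m-1}(\overline{\R^N_+})\cap W^{2m,p}_{loc}$) is legitimate because $p>N/2m$ guarantees $\int_{B_R^+}G_R^+(x,y)|h(y)|\,dy\le C\|h\|_{L^p(B_R^+)}$ and the boundary data of $u$ are $C^{2m-1}$; this is exactly the situation covered there, so no new argument is needed. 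Combining monotone convergence on the interior with the vanishing of the boundary integrals yields \eqref{rep_plus} for every $x\in\R^N_+$.
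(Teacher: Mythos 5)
The overall plan---Poisson--Green representation on the growing balls $B_R^+$, monotone convergence of the volume term via Lemma~\ref{monotone}, and control of the boundary terms via the Grunau--Sweers bound \eqref{grsw_trans_plus}---is exactly the paper's, but the critical step (that the boundary integrals tend to zero) is not actually established, and the way it is argued contains a real error. First, a geometric misreading: in Section~\ref{sec:representation} the domain $B_R^+=\{y:|y-P_R|<R\}$ is a \emph{full} ball internally tangent to $\partial\R^N_+$ at the origin, not a half-ball. Its boundary has no flat face in $\{y_1=0\}$; the set $\partial B_R^+\cap\{y_1=0\}$ is a single point. Your observation that ``the portion lying in $\{y_1=0\}$ contributes nothing'' is therefore vacuous, and the ``spherical cap $\{y_1>0\}$'' is essentially the whole sphere, which passes arbitrarily close to the origin. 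On that part $|x-y|$ is comparable to the fixed quantity $|x|$, so the decay of the Green function derivatives is of no help there.

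Second, the decay you claim is both too optimistic and insufficient. The two orders that appear are $|k|=2(m-i)$ and $|k|=2(m-i)+1$; for $i=m/2$ (the even case) these give $|x-y|^{-N}$ and $|x-y|^{-N-1}$, not $|x-y|^{-(m+N-1)}$, which is the \emph{best} case ($i=1$). More fundamentally, even with rapid decay, $\oint_{\partial B_R^+}|x-y|^{-N}\,ds_y$ does \emph{not} tend to zero: the paper's computation \eqref{n2_a0} shows that the part of the sphere with $y_1\le\delta$ contributes a quantity bounded by $c/x_1$, uniformly in $R$, while only the complementary part is $O(1/\sqrt{\delta R})$. The constant near-origin contribution cannot be killed by geometry alone; indeed, for $u\equiv 1$ the volume term vanishes and the boundary terms must sum to $1$ for every $R$, so they cannot go to zero without extra structure. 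The missing idea is precisely the one the paper supplies: using the Dirichlet conditions and the boundedness of derivatives up to order $m$, the mean-value theorem shows $\sum_{i\le m/2-1}\bigl(|\Delta^i u|+|\partial_\nu\Delta^i u|\bigr)\le\eps$ on the strip $\{y_1\le\delta\}$; feeding this smallness into the split estimate \eqref{est_n2} makes the near-origin contribution $O(\eps/x_1)$ while the far contribution is $O(1/\sqrt{\delta R})\to 0$. Without this use of the boundary conditions on the strip (rather than only on the measure-zero set $\{y_1=0\}$), the proof does not close.
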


\begin{proof}
Let us first consider the case where $m$ is even. It clearly suffices to
prove \eqref{rep_plus} for $x=(x_1,0,\dots,0) \in \R^N_+$ with $x_1>0$
fixed. In the following we consider $R>2x_1$. Then $x\in B_R^+$ , $x_1 \in (0,R)$, and \eqref{grsw_trans}
yields for $y \in \partial B_R^+$ 
and $i \le \frac{m}{2}$ the following estimates: 
\begin{align*}
|\Delta_y^{m-i}G_R^+(x,y)| & \le C_{i,N,m}|x-y|^{-m-N+2i} |x|^m, \\
|\partial_{\nu_y}\Delta^{m-i}_yG_R^+(x,y)| & \le C_{i,N,m}|x-y|^{-m-N+2i-1}
|x|^m.
\end{align*}
Combining this with \eqref{eq:green-poisson-even}, we get
\begin{multline*}
\left|\int_{B_R^+} G_R^+(x,y)(-\Delta)^{m}u(y)\,dy-u(x)\right|\\
\leq C |x|^m 
\sum_{i=1}^{m/2} \oint_{\partial B_R^+}\Bigl(|\Delta^{i-1} 
u(y)| |x-y|^{-m-N+2i-1} +|x-y|^{-m-N+2i}
|\partial_{\nu_y} \Delta^{i-1}u(y)| \Bigr)\,ds_y.
\end{multline*}
Since $|x-y| \ge |x|$ for $y \in \partial B_R^+$, we conclude that
\begin{align}
\left|\int_{B_R^+} G_R^+(x,y)(-\Delta)^{m}u(y)\,dy-u(x)\right| &\le C_x\! \sum_{i=1}^{m/2} 
\oint_{\partial B_R^+}\!|x-y|^{-N}\bigl(|\Delta^{i-1} 
u(y)|+|\partial_{\nu_y}\Delta^{i-1}u(y)| \bigr)\,ds_y \nonumber\\
&= C_x \sum_{i=0}^{m/2-1} \oint_{\partial B_R^+}|x-y|^{-N}\Bigl(|\Delta^{i} 
u(y)|+|\partial_{\nu_y}\Delta^{i}u(y)| \Bigr)\,ds_y.\label{green-repr-extra1}
\end{align}
We claim that, for every $x \in \R^N_+$, 
\begin{equation}
  \label{eq:green-repr-extra}
\sum_{i=0}^{m/2-1} \oint_{\partial B_R^+}|x-y|^{-N}\Bigl(|\Delta^{i} 
u(y)|+|\partial_{\nu_y}\Delta^{i}u(y)| \Bigr)\,ds_y \to 0\qquad
\text{as $R \to \infty$.}  
\end{equation}
For $N=1$ this is obvious since
$$
\oint_{\partial B_R^+}|x-y|^{-N}\Bigl(|\Delta^{i} 
u(y)|+|\partial_{\nu_y}\Delta^{i}u(y)| \Bigr)\,ds_y= |x-2R|^{-1}
\Bigl(|u^{(2i)}(2R)|+|u^{(2i+1)}(2R)|\Bigr)
$$
for $i \le \frac{m}{2}-1$ as a consequence of the boundary
conditions. For $N \ge 2$ and $0 \le a \le b \le 2R$ let us consider the set 
$B(a,b):= \{y \in \partial B_R^+\::\: a \le y_1 \le b\}$. On $B(a,b)$ we have 
$y=(y_1,y')$ with $|y'|=\sqrt{2Ry_1-y_1^2}$. For $N\geq 3$ we parameterize 
$B(a,b)$ by the map 
$$
F:\left\{
\begin{array}{rcl}
(a,b)\times \mS^{N-2} & \to & B(a,b), \vspace{\jot}\\
(y_1,\phi) & \mapsto & \Big(y_1,\sqrt{2Ry_1-y_1^2}
\theta(\phi)\Big), 
\end{array}
\right.
$$
where $\phi=(\phi_2,\ldots,\phi_{N-1})$ and 
$$
\theta(\phi) = 
\left(\begin{array}{r} 
\cos\phi_2 \sin\phi_3\sin\phi_4\ldots \sin\phi_{N-1} \\
\sin\phi_2 \sin\phi_3\sin\phi_4\ldots \sin\phi_{N-1} \\
\cos\phi_3\sin\phi_4\ldots \sin\phi_{N-1} \\
\multicolumn{1}{c}{\vdots}\\
\cos\phi_{N-2}\sin\phi_{N-1} \\
\cos\phi_{N-1}
\end{array}
\right).
$$
Let $DF=(b_1|b_2|\cdots|b_{N-1})$ be the Jacobian Matrix of the map $F$ and 
$\gr(DF)=\det(DF^T\cdot DF)=\det(b_i\cdot b_j)_{i,j=1,\ldots,N-1}$ be the 
Gram determinant of $DF$. Since $b_1=\big(1,\frac{R-y_1}{\sqrt{2Ry_1-y_1^2}}
\theta\big)^T$ and $b_i=\big(0,\sqrt{2Ry_1-y_1^2}
\frac{\partial\theta}{\partial\phi_i}\big)^T$ for $i=2,\ldots,N-1$ we find
$b_1\cdot b_1=\frac{R^2}{2Ry_1-y_1^2}$, $b_1\cdot b_i=0$ for $i=2,\ldots,N-1$. 
Therefore 
$$
\sqrt{\gr(DF)}= \sqrt{b_1\cdot b_1}\cdot (2Ry_1-y_1^2)^\frac{N-2}{2}
|\det(D\theta)| = R (2Ry_1-y_1^2)^\frac{N-3}{2}\cdot|\det(D\theta)|.
$$
Since
$$
\det(D\theta)=(-1)^{N-1}\sin\phi_3(\sin\phi_4)^2\ldots(\sin\phi_{N-1})^{N-3}
$$
we obtain finally 
$$
\sqrt{\gr(DF)}\leq R (2Ry_1-y_1^2)^\frac{N-3}{2}.
$$
Therefore we can write the surface integral as follows (if $N=2$ the 
line integral is parameterized by $(y_1,\pm\sqrt{2Ry_1-y_1^2})^T$)
$$
\oint_{B(a,b)}|x-y|^{-N}ds_y =\left\{\begin{array}{ll}
\displaystyle\int_a^b \int_{\mS^{N-2}} 
|x-F(y_1,\phi)|^{-N} \sqrt{\gr DF}\,d\phi\,dy_1 & 
\mbox{ if } N\geq 3, \vspace{\jot}\\
\displaystyle 2\int_a^b \frac{R(2Ry_1-y_1^2)^{-1/2}}{x_1^2+2Ry_1-2x_1y_1}
\,dy_1 & \mbox{ if } N=2.
\end{array}
\right.
$$
Since $|x-F(y_1,\phi)|^2=x_1^2+2Ry_1-2x_1y_1$ and $b \le 2R$, we can now estimate as follows:
\begin{align*}
\oint_{B(a,b)}|x-y|^{-N}ds_y & \le c_1 \int_a^{2R}\!\!\!
\frac{R(2Ry_1-y_1^2)^{\frac{N-3}{2}}}{
(x_1^2+2Ry_1-2x_1y_1)^{\frac{N}{2}}}\:dy_1 = \frac{c_1}{R^2} \int_a^{2R}\!\!
\frac{(2\frac{y_1}{R}-(\frac{y_1}{R})^2)^{\frac{N-3}{2}}}
{(\frac{x_1^2}{R^2}+ 
2\frac{y_1}{R}(1-\frac{x_1}{R}))^{\frac{N}{2}}}\:dy_1\\
&= \frac{c_1}{R} \int_\frac{a}{R}^2
\frac{(2t-t^2)^{\frac{N-3}{2}}}{(\frac{x_1^2}{R^2}+
2t(1-\frac{x_1}{R}))^{\frac{N}{2}}}\:dt
\le 
\frac{c_1}{R} \int_\frac{a}{R}^2
\frac{t^{\frac{N-3}{2}}(2-t)^\frac{N-3}{2}}{(\frac{x_1^2}{R^2}+t)^\frac{N}{2}}
\:dt \\
&= \frac{c_1}{R} \int_\frac{a}{R}^2
\frac{t^{-\frac{1}{2}}(2-t)^\frac{N-3}{2}}{\frac{x_1^2}{R^2}+t}
\biggl(\frac{t}{\frac{x_1^2}{R^2}+t}\biggr)^{\frac{N-2}{2}}\!\!dt
\le \frac{c_2}{R} \int_\frac{a}{R}^2
\frac{t^{-\frac{1}{2}}(2-t)^{-\frac{1}{2}}}{\frac{x_1^2}{R^2}+t}\:dt
\end{align*}
with $c_2=2^{\frac{N-2}{2}}c_1$.
Here we have also used that $R \ge 2x_1$ and $N \ge 2$. From now on we assume $a \le R$ and
split the remaining integral as follows:
\begin{equation}
\label{n2_00} 
\oint_{B(a,b)}|x-y|^{-N}ds_y \le 
\underbrace{\frac{c_2}{R}\int_\frac{a}{R}^1
\frac{t^{-1/2}(2-t)^{-1/2}}{\frac{x_1^2}{R^2}+t}
\,dt}_{=:I_1}+\underbrace{\frac{c_2}{R}\int_1^2
\frac{t^{-1/2}(2-t)^{-1/2}}{\frac{x_1^2}{R^2}+t}\,dt}_{=:I_2}.
\end{equation}
In the first integral $I_1$ we have $(2-t)\geq 1$ and therefore
$$
I_1 \le \frac{c_2}{R} \int_\frac{a}{R}^1 
\frac{t^{-1/2}}{\frac{x_1^2}{R^2}+t}\,dt.
$$
If $a>0$, we conclude that  
\begin{equation}
I_1 \leq \frac{c_2}{R} \int_\frac{a}{R}^1 t^{-\frac{3}{2}}\,dt \leq 
 \frac{2c_2}{\sqrt{aR}},
\label{n2_apos}
\end{equation}
while for $a=0$ the substitution $z= \frac{R^2}{x_1^2}t$ yields
\begin{equation}
I_1 \le 
\frac{c_2}{R} \int_0^1
\frac{t^{-1/2}}{\frac{x_1^2}{R^2}+t}\,dt = \frac{c_2}{x_1} \int_0^\frac{R^2}{x_1^2}\frac{1}{\sqrt{z}(1+z)}\,dz 
\le \frac{c_2}{x_1} \int_0^\infty \frac{1}{\sqrt{z}(1+z)}\,dz = 
\frac{c_3}{x_1}.
\label{n2_a0}
\end{equation}
For $I_2$ we have 
\begin{equation}
  \label{eq:I2}
I_2 \le \frac{c_2}{R}\int_1^2 (2-t)^{-1/2}\,dt= \frac{c_4}{R}.   
\end{equation}
Collecting the inequalities \eqref{n2_00},
 \eqref{n2_apos}, \eqref{n2_a0} and recalling that $R
 \ge 2x_1$, we obtain 
\begin{equation}
\oint_{B(a,b)}|x-y|^{-N}ds_y \le c_5 \left \{
  \begin{aligned}
   &1/x_1&&\qquad \text{for $a=0$,}\\ 
   &1/\sqrt{aR}&& \qquad \text{for $a>0$.}
  \end{aligned}
\right. 
\label{est_n2}
\end{equation}
Now let $\eps>0$. By the standard mean-value theorem using the Dirichlet boundary
conditions and the boundedness of the derivatives of orders up to $m$, there exists $\delta>0$ 
such that 
$$
\sum_{i=0}^{m/2-1} \Bigl(|\Delta^{i} 
u(y)|+|\partial_{\nu_y}\Delta^{i}u(y)| \Bigr) \le \eps \qquad \text{on $\;\{y
  \in \R^N_+\::\: |y_1|\le \delta\}$.}
$$
Hence we apply \eqref{est_n2} and obtain
\begin{align*}
\sum_{i=0}^{m/2-1}& \oint_{\partial B_R^+}|x-y|^{-N}\Bigl(|\Delta^{i} 
u(y)|+|\partial_{\nu_y}\Delta^{i}u(y)| \Bigr)\,ds_y\\
&\le \eps \oint_{B(0,\delta)}|x-y|^{-N}\:ds_y+
c_6 \oint_{B(\delta,2R)}|x-y|^{-N}\:ds_y\\
&\le \eps \frac{c_5}{x_1}+  \frac{c_5c_6}{\sqrt{\delta R}} \;
\to \; \eps \frac{c_5}{x_1} \qquad \text{as $R \to \infty$.}   
\end{align*}
Since $\eps$ was chosen arbitrarily, we conclude that
\eqref{eq:green-repr-extra} holds.

\smallskip

Using
\eqref{green-repr-extra1},~\eqref{eq:green-repr-extra} and 
Lemma \ref{monotone} together with the monotone convergence theorem we get
$$
u(x)=\lim_{R \to \infty}
\int_{B_R^+}G_R^+(x,y)(-\Delta)^{m}u(y)\:dy= 
\int_{\R^N_+}G^+_\infty(x,y)(-\Delta)^mu(y)\:dy.
$$
This finishes the proof in the case where $m$ is an even integer.
In the case where $m$ is odd only minor modifications are needed. We use
\eqref{eq:green-poisson-odd} instead of \eqref{eq:green-poisson-even}
together with the estimates arising from \eqref{grsw_trans_plus}: for $x\in B_R^+$,
$y \in \partial B_R^+$ and $i \le \frac{m-1}{2},$
$$
|\Delta_y^{m-i}G_R^+(x,y)| \le C_{i,N,m}|x-y|^{-m-N+2i} |x|^m
$$
and for $i \le \frac{m+1}{2}$,
$$
|\partial_{\nu_y}\Delta^{m-i}_yG_R^+(x,y)| \le C_{i,N,m}|x-y|^{-m-N+2i-1} |x|^m.
$$
By essentially the same estimates as before, we now obtain
\begin{multline*}
\left|\int_{B_R^+} G_R^+(x,y)(-\Delta)^{m}u(y)\:dy-u(x)\right| \\
\le C_x \oint_{\partial B_R^+}|x-y|^{-N}
\Bigl( \sum_{i=0}^{(m-1)/2}|\Delta^i u(y)|+
\sum_{i=0}^{(m-3)/2}|\partial_{\nu_y}\Delta^{i}u(y)|\Bigr)\:ds_y.
\end{multline*}
Again, as a consequence of the boundary
conditions, for every $\eps>0$ there exists 
$\delta>0$ such that 
$$
\sum_{i=0}^{(m-1)/2}|\Delta^i u(y)|+
\sum_{i=0}^{(m-3)/2}|\partial_{\nu_y}\Delta^{i}u(y)| \le \eps \qquad \text{on
  $\;\{y \in \R^N_+\::\: |y_1|\le \delta\}$.}
$$
We therefore may conclude as in the case where $m$ is even that 
$$
\oint_{\partial B_R^+}|x-y|^{-N}
\Bigl( \sum_{i=0}^{(m-1)/2}|\Delta^i u(y)|+
\sum_{i=0}^{(m-3)/2}|\partial_{\nu_y}\Delta^{i}u(y))|\Bigr)\:ds_y \to 0 \qquad
\text{as $R \to \infty$.} 
$$
Using again the monotone convergence theorem, we conclude that
$$
u(x)=\lim_{R \to \infty}
\int_{B_R^+}G_R^+(x,y)(-\Delta)^{m}u(y)\:dy=
\int_{\R^N_+}G^+_\infty(x,y)(-\Delta)^mu(y)\:dy.
$$
\end{proof}


\section{Proof of the Liouville Theorem in the half-space}
\label{sec:proof-theorem}
This section is devoted to the proof of Theorem~\ref{rn_plus}.
Let $m\in \N$ and assume that $q>1$ if $N\leq 2m$ and $1<q<\frac{N+2m}{N-2m}$ if $N>2m$. Let $u$ be a classical non-negative bounded solution of 
$$ 
(-\Delta)^m u = u^q \mbox{ in } \R^N_+, \quad u=
\frac{\partial u}{\partial x_1}=\ldots=
\frac{\partial^{m-1}u}{\partial x_1^{m-1}} =0 \mbox{ on }\partial \R^N_+.
$$
We need to show that $u\equiv 0$. From
Theorem~\ref{green-poiss-form_plus} we know that
\begin{equation}
u(x)= \int_{\R^N_+}G^+_\infty(x,y)u^q(y)\:dy \qquad \text{for every $x
  \in \R^N_+$.}
\label{rep_plus-repeat}
\end{equation}
We consider the conformal diffeomorphism 
$$
\phi: \B \to \R^N_+,\qquad \phi(y)= 2\frac{y+e_1}{|y+e_1|^2}-e_1,
$$
where $e_1=(1,0,\dots,0)$ is the first coordinate vector. The following
formula shows how $G^+_\infty$ is related to the Green function $G_1$ on the unit
ball.

\begin{lemma}
  \label{green-conform}
$G_\infty^+(\phi(x),\phi(y))= \Bigl(\frac{2}{|x+e_1|
  |y+e_1|}\Bigr)^{2m-N} G_1(x,y)\quad$ for all $x,y \in \B$.
\end{lemma}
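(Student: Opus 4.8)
The plan is to establish the conformal transformation law for the polyharmonic Green function by a direct computation using the explicit Boggio-type formulas. Recall that both $G_\infty^+$ and $G_1$ have the closed form $\frac{k_N^m}{2}|x-y|^{2m-N}\int_0^{\psi}\frac{z^{m-1}}{(z+1)^{N/2}}\,dz$, differing only in the kernel factor $|x-y|^{2m-N}$ and in the argument $\psi$: for $G_1$ one has $\psi(x,y)=\frac{(1-|x|^2)(1-|y|^2)}{|x-y|^2}$, while for $G_\infty^+$ one has $\psi_\infty(\xi,\eta)=\frac{4\xi_1\eta_1}{|\xi-\eta|^2}$. So it suffices to prove two identities: first, that $\psi$ is a conformal invariant in the sense that $\psi_\infty(\phi(x),\phi(y))=\psi(x,y)$ for all $x,y\in\B$; and second, the Jacobian-type relation $|\phi(x)-\phi(y)| = \frac{2}{|x+e_1||y+e_1|}\,|x-y|$. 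Granting both, we get
\[
G_\infty^+(\phi(x),\phi(y)) = \frac{k_N^m}{2}\Bigl(\tfrac{2}{|x+e_1||y+e_1|}\Bigr)^{2m-N}|x-y|^{2m-N}\int_0^{\psi(x,y)}\frac{z^{m-1}}{(z+1)^{N/2}}\,dz,
\]
which is exactly $\bigl(\tfrac{2}{|x+e_1||y+e_1|}\bigr)^{2m-N}G_1(x,y)$.

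For the distance identity, I would write $\phi(y)+e_1 = \frac{2(y+e_1)}{|y+e_1|^2}$, so $\phi$ is (a translate of) the inversion $w\mapsto 2w/|w|^2$ in the variable $w=y+e_1$, and inversions satisfy the well-known formula $\bigl|\frac{2w}{|w|^2}-\frac{2w'}{|w'|^2}\bigr| = \frac{2|w-w'|}{|w||w'|}$, which one checks by expanding both sides. Here $w=x+e_1$, $w'=y+e_1$, $w-w'=x-y$, giving the claim. For the invariance of $\psi$, the cleanest route is to compute $1-|\phi(y)|^2$ in terms of $y$: from $\phi(y) = 2\frac{y+e_1}{|y+e_1|^2}-e_1$ one finds $|\phi(y)|^2 = |\phi(y)+e_1|^2 - 2(\phi(y)+e_1)\cdot e_1 + 1$, and since $\phi(y)+e_1 = \frac{2(y+e_1)}{|y+e_1|^2}$ has squared norm $\frac{4}{|y+e_1|^2}$ and first component $\frac{2(y_1+1)}{|y+e_1|^2}$, a short manipulation yields $1-|\phi(y)|^2 = \frac{4(1-|y|^2)}{|y+e_1|^2}$ (using $|y+e_1|^2 = |y|^2+2y_1+1$). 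On the other hand $\phi(y)_1 = \frac{2(y_1+1)}{|y+e_1|^2}-1 = \frac{1-|y|^2}{|y+e_1|^2}$, i.e. $4\phi(y)_1 = \frac{4(1-|y|^2)}{|y+e_1|^2} = 1-|\phi(y)|^2$; so in fact $1-|\phi(y)|^2 = 4\phi(y)_1$ for every $y\in\B$, which is just the statement that $\phi$ maps $\B$ onto $\R^N_+$ correctly. Combining,
\[
\psi_\infty(\phi(x),\phi(y)) = \frac{4\phi(x)_1\phi(y)_1}{|\phi(x)-\phi(y)|^2} = \frac{(1-|x|^2)(1-|y|^2)}{|x+e_1|^2|y+e_1|^2}\cdot\frac{|x+e_1|^2|y+e_1|^2}{|x-y|^2} = \psi(x,y).
\]

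I expect the computation to be entirely routine — there is no real obstacle, only bookkeeping — so the main thing to be careful about is getting the factor $\frac{2}{|x+e_1||y+e_1|}$ and the exponent $2m-N$ exactly right, and confirming that the normalization constant $k_N^m$ is genuinely the same in Boggio's formula for $G_1$ and in the defining formula \eqref{eq:half-space-green-function} for $G_\infty^+$ (it is, by construction). One could alternatively phrase the proof abstractly via the conformal covariance of the GJMS/polyharmonic operator $(-\Delta)^m$ under the Kelvin transform, noting that $\phi$ pulls back the flat metric on $\R^N_+$ to a conformal multiple of the flat metric on $\B$ and that Dirichlet conditions are preserved; this would give the transformation law for the Green functions with the conformal weight $|x+e_1|^{-(N-2m)}$ appearing naturally. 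But since all objects here are explicit, the direct verification above is shorter and self-contained, and that is the route I would take.
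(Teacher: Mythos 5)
Your proposal is correct and follows essentially the same route as the paper: verify the inversion distance identity $|\phi(x)-\phi(y)|=\frac{2|x-y|}{|x+e_1||y+e_1|}$ and the first-coordinate identity $\phi(y)_1|y+e_1|^2=1-|y|^2$, deduce $\psi_\infty(\phi(x),\phi(y))=\psi(x,y)$, and then substitute into the explicit Boggio-type formulas for $G_1$ and $G_\infty^+$. The side computation of $1-|\phi(y)|^2$ is unnecessary (only $\phi(y)_1$ is needed), but it does no harm.
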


\begin{proof}
An easy calculation yields 
\begin{equation}
|\phi(x)-\phi(y)|= \frac{2|x-y|}{|x+e_1| |y+e_1|} \qquad \text{for $x,y \in \B$.}
\end{equation}
Considering the functions $\psi(x,y)=\frac{(1-|x|^2)(1-|y|^2)}{|x-y|^2}$
and $\psi_\infty(x,y)
=\frac{4x_1y_1}{|x-y|^2}$ as in Section~\ref{sec:representation}, we
obtain
  \begin{align*}
\psi_\infty(\phi(x),\phi(y))&=\frac{|x+e_1|^2\phi_1(x)
  |y+e_1|^2\phi_1(y)}{|x-y|^2}=\frac{(2x_1+2-|x+e_1|^2)(2y_1+2-|y+e_1|^2)}{|x-y|^2}\\
&=\frac{(1-|x|^2)(1-|y|^2)}{|x-y|^2}= \psi(x,y) \quad \text{for $x,y
  \in \B$.}
  \end{align*}
We conclude that 
\begin{align*}
G_\infty^+(\phi(x),\phi(y))&= \frac{k_N^m}{2}|\phi(x)-\phi(y)|^{2m-N}\int_0^{\psi_\infty(\phi(x),\phi(y))}
\frac{z^{m-1}}{(z+1)^{N/2}}\,dz\\
&=\Bigl(\frac{2}{|x+e_1|
  |y+e_1|}\Bigr)^{2m-N} \frac{k_N^m}{2}|x-y|^{2m-N}\int_0^{\psi(x,y)}
\frac{z^{m-1}}{(z+1)^{N/2}}\,dz\\  
&=\Bigl(\frac{2}{|x+e_1|
  |y+e_1|}\Bigr)^{2m-N}  G_1(x,y)\qquad \text{for all $x,y \in \B$.}
\end{align*}
\end{proof}

\begin{corollary}
\label{sol-conform} 
Define the function $v: \B \to \R$ by 
$$
v(x):=|x+e_1|^{2m-N}u(\phi(x))
$$ 
and the function $h:\B \times [0,\infty) \to [0,\infty)$ by 
$$
h(x,t):=2^{2m}|x+e_1|^{-\alpha} \: t^q 
$$
where $\alpha:=N+2m-q(N-2m)\geq 0$ by assumption on $q$. Then $v$ satisfies
\begin{equation}
  \label{eq:representation}
v(x)= \int_{\B}G_1(x,y)h(y,v(y))\:dy\qquad \text{for all $x \in \B$.}
\end{equation}
\end{corollary}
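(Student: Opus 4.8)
The plan is to derive \eqref{eq:representation} directly from the half-space representation formula \eqref{rep_plus-repeat} by performing the change of variables induced by the conformal diffeomorphism $\phi\colon\B\to\R^N_+$ and by invoking Lemma~\ref{green-conform}. Since all integrands occurring below are non-negative, no a priori integrability has to be checked: the change-of-variables theorem applies in $[0,\infty]$, and finiteness of every quantity follows from the finiteness of the left-hand side of \eqref{rep_plus-repeat}.

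First I would record the Jacobian of $\phi$. Writing $\phi(\eta)=2w/|w|^2-e_1$ with $w=\eta+e_1$, the map $w\mapsto 2w/|w|^2$ has differential $\frac{2}{|w|^2}\bigl(I-2ww^{\mathsf T}/|w|^2\bigr)$, whose determinant equals $-\bigl(2/|w|^2\bigr)^N$; hence $|\det D\phi(\eta)|=\bigl(2/|\eta+e_1|^2\bigr)^N$ for $\eta\in\B$. Evaluating \eqref{rep_plus-repeat} at $x=\phi(\xi)$ with $\xi\in\B$ and substituting $y=\phi(\eta)$, $\eta\in\B$, I obtain
$$
u(\phi(\xi))=\int_{\B}G^+_\infty(\phi(\xi),\phi(\eta))\,u^q(\phi(\eta))\Bigl(\frac{2}{|\eta+e_1|^2}\Bigr)^{N}d\eta .
$$
Into this I substitute Lemma~\ref{green-conform}, i.e.\ $G^+_\infty(\phi(\xi),\phi(\eta))=\bigl(2/(|\xi+e_1|\,|\eta+e_1|)\bigr)^{2m-N}G_1(\xi,\eta)$, together with the identity $u(\phi(\eta))=|\eta+e_1|^{N-2m}v(\eta)$ (the definition of $v$), and I multiply both sides by $|\xi+e_1|^{2m-N}$.

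It then remains only to collect the conformal weights. The prefactor $|\xi+e_1|^{2m-N}$ cancels the factor $|\xi+e_1|^{-(2m-N)}$ coming from the Green function, so that the right-hand side depends on $\xi$ only through $G_1(\xi,\eta)$; the numerical constants combine to $2^{2m-N}\cdot 2^{N}=2^{2m}$; and the powers of $|\eta+e_1|$ add up to $-(2m-N)+q(N-2m)-2N=-\bigl(N+2m-q(N-2m)\bigr)=-\alpha$. Hence
$$
v(\xi)=\int_{\B}G_1(\xi,\eta)\,2^{2m}|\eta+e_1|^{-\alpha}v^q(\eta)\,d\eta=\int_{\B}G_1(\xi,\eta)\,h(\eta,v(\eta))\,d\eta ,
$$
which is precisely \eqref{eq:representation} after renaming $\xi\to x$, $\eta\to y$. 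There is no genuine obstacle here: the entire content is the bookkeeping of the three conformal weights $|\eta+e_1|^{-(2m-N)}$ (from the Green function), $|\eta+e_1|^{q(N-2m)}$ (from $u^q\circ\phi$) and $|\eta+e_1|^{-2N}$ (from the Jacobian), whose sum is $-\alpha$ exactly by the definition of $\alpha$. The only additional point worth mentioning is that $\phi$ indeed maps $\B$ onto $\R^N_+$, so that the substitution is legitimate; this is immediate from the identity $|\eta+e_1|^2\phi_1(\eta)=1-|\eta|^2>0$ already established in the proof of Lemma~\ref{green-conform}.
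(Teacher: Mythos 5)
Your proof is correct and follows essentially the same route as the paper: evaluate the half-space representation \eqref{rep_plus-repeat} at $x=\phi(\xi)$, change variables $y=\phi(\eta)$ using $|J_\phi|=2^N|\eta+e_1|^{-2N}$, substitute Lemma~\ref{green-conform} and the definition of $v$, and collect the conformal weights. The bookkeeping of exponents and constants matches the paper's computation exactly.
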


\begin{proof}
The Jacobian determinant of $\phi$ satisfies
$|J_\phi(y)|=\frac{2^N}{|y+e_1|^{2N}}$ for $y \in \R^N_+$. Therefore we have 
\begin{align*}
u(\phi(x))&=\int_{\R^N_+}  G^+_\infty(\phi(x),y)u^q(y)\:dy=\int_{\B} G^+_\infty(\phi(x),\phi(y)) u^q(\phi(y))|J_\phi(y)|\:dy\\     
&=\int_{\B} \left(\frac{2}{|x+e_1||y+e_1|}\right)^{2m-N}
G_1(x,y) \Big(|y+e_1|^{N-2m}v(y)\Big)^q\frac{2^N}{|y+e_1|^{2N}}\:dy\\     
&=\frac{1}{|x+e_1|^{2m-N}}\int_{\B}G_1(x,y)h(y,v(y))\:dy     
\end{align*}
for all $x \in \B$, so that
$$
v(x)=|x+e_1|^{2m-N}u(\phi(x))=
\int_{\B}G_1(x,y)h(y,v(y))\:dy.
$$     
\end{proof}

\begin{proposition}
  \label{symmetry}
The function $v: \B \to \R$ is axially symmetric with respect to the
$x_1$-axis.
\end{proposition}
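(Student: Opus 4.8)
The plan is to run the method of moving planes directly on the integral equation \eqref{eq:representation}, in the spirit of Chen--Li--Ou, using the positivity of Boggio's kernel $G_1$ and the near-symmetry, under reflections fixing the $x_1$-axis, of both $G_1$ and the weight $|x+e_1|^{-\alpha}$. Since \eqref{eq:representation} is invariant under rotations about the $x_1$-axis, it suffices to prove that $v$ is symmetric with respect to the single hyperplane $T:=\{x\in\R^N:x_2=0\}$; axial symmetry then follows. For $\lambda\in(0,1)$ write $T_\lambda:=\{x_2=\lambda\}$, let $\sigma_\lambda$ be the reflection at $T_\lambda$, set $x^\lambda:=\sigma_\lambda x$, $\Sigma_\lambda:=\{x\in\B:x_2>\lambda\}$ and $w_\lambda(x):=v(x^\lambda)-v(x)$ on $\Sigma_\lambda$, with $w_\lambda^-:=\max\{-w_\lambda,0\}$. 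The goal is $w_\lambda\ge0$ on $\Sigma_\lambda$ for all $\lambda\in(0,1)$; running the analogous argument with the caps $\{x_2<\mu\}$, $\mu\in(-1,0)$, and then letting $\lambda,\mu\to0$ yields $v(x^0)\ge v(x)$ for every $x\in\B$, hence (applying this once more to $x^0$) $v=v\circ\sigma_0$, i.e. symmetry in $T$.

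First I would derive the fundamental integral inequality for $w_\lambda$. Decompose $\B=\Sigma_\lambda\cup\Sigma_\lambda^\ast\cup M_\lambda$, where $\Sigma_\lambda^\ast:=\sigma_\lambda(\Sigma_\lambda)\subset\B$ and $M_\lambda$ is the ``middle strip'' on which $\sigma_\lambda$ leaves $\B$; split \eqref{eq:representation} over these three pieces and substitute $y\mapsto y^\lambda$ on $\Sigma_\lambda^\ast$. The computation rests on three ingredients. (a) A set of comparison inequalities for Boggio's Green function under $\sigma_\lambda$, all obtained from the explicit formula written in the form $G_1(x,y)=\tfrac{k_N^m}{2}\,A^m\int_0^1 t^{m-1}(At+|x-y|^2)^{-N/2}\,dt$ with $A=(1-|x|^2)(1-|y|^2)$ --- so that $G_1$ increases in $A$ at fixed $|x-y|$ and decreases in $|x-y|$ at fixed $A$ --- together with the elementary fact that, for $\lambda\ge0$, $\sigma_\lambda$ moves every point of $\Sigma_\lambda$ closer to the origin and closer to $-e_1$ (one computes $|y^\lambda|^2=|y|^2-4\lambda(y_2-\lambda)$ and $|y^\lambda+e_1|^2=|y+e_1|^2-4\lambda(y_2-\lambda)$, both strictly less than their unreflected counterparts when $\lambda>0$ and $y\in\Sigma_\lambda$). (b) The weight monotonicity $|y^\lambda+e_1|^{-\alpha}\ge|y+e_1|^{-\alpha}$ on $\Sigma_\lambda$, valid since $\alpha\ge0$. (c) The monotonicity of $t\mapsto t^q$ on $[0,\infty)$ and its convexity ($q>1$), so that $0\le v\le\|v\|_\infty$ gives $v(y)^q-v(y^\lambda)^q\le q\|v\|_\infty^{q-1}\,w_\lambda^-(y)$ on $\{w_\lambda<0\}$. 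Using (a)--(c) together with $h\ge0$, the contributions of $M_\lambda$ and of the part of $\Sigma_\lambda$ on which $w_\lambda\ge0$ have the favourable sign, and after some bookkeeping one arrives at an inequality of the schematic form
\[
0\le w_\lambda^-(x)\le C\int_{\Sigma_\lambda^-}G_1(x,y)\,|y+e_1|^{-\alpha}\,w_\lambda^-(y)\,dy,\qquad x\in\Sigma_\lambda,
\]
where $\Sigma_\lambda^-:=\{y\in\Sigma_\lambda:w_\lambda(y)<0\}$ and $C=C(\|v\|_\infty,q)$.

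Next I would run the moving plane. For $\lambda$ bounded away from $0$ the weight $|y+e_1|^{-\alpha}$ is bounded on $\Sigma_\lambda$ (there $|y+e_1|\ge y_2>\lambda$), and $G_1$ is dominated by the Riesz kernel $|x-y|^{2m-N}$ (by $\log\tfrac1{|x-y|}$ if $N=2m$, and $G_1$ is bounded if $N<2m$); hence the Hardy--Littlewood--Sobolev inequality turns the displayed estimate into
\[
\|w_\lambda^-\|_{L^s(\Sigma_\lambda^-)}\le C_\lambda\,|\Sigma_\lambda^-|^{\theta}\,\|w_\lambda^-\|_{L^s(\Sigma_\lambda^-)}
\]
for a suitable exponent $s$ and some $\theta>0$. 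For $\lambda$ close to $1$ the cap $\Sigma_\lambda$ has small measure, so $C_\lambda|\Sigma_\lambda^-|^\theta<1$ and therefore $w_\lambda^-\equiv0$. Let $\Lambda:=\inf\{\mu\in[0,1):w_\lambda\ge0\ \text{on}\ \Sigma_\lambda\ \text{for all}\ \lambda\in[\mu,1)\}$; by continuity $w_\Lambda\ge0$. If $\Lambda>0$, then by the strong positivity inherent in the representation either $w_\Lambda\equiv0$ on $\Sigma_\Lambda$ or $w_\Lambda>0$ in $\opint\Sigma_\Lambda$. In the latter case $|\{w_{\Lambda-\eps}<0\}\cap\Sigma_{\Lambda-\eps}|\to0$ as $\eps\to0^+$ (these sets shrink to a set of measure zero), so the estimate again forces $w_{\Lambda-\eps}\ge0$ for small $\eps$, contradicting minimality of $\Lambda$. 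In the former case $v=v\circ\sigma_\Lambda$ on $\Sigma_\Lambda$, and feeding this back into \eqref{eq:representation} the inequality chain from (a) collapses to equalities; since $\Lambda>0$ and $\alpha>0$ the strict inequality $|y^\Lambda+e_1|<|y+e_1|$ then forces $v^q\equiv0$ on $\Sigma_\Lambda$, hence $v\equiv0$ and the Proposition is trivially true. Therefore $\Lambda=0$, which is what we wanted.

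I expect the main obstacle to be ingredient (a): extracting from Boggio's formula the precise comparison inequalities for $G_1$ under the reflections $\sigma_\lambda$, $\lambda\ge0$ --- in particular the ``maximum-principle-in-the-cap'' type inequality for $x,y\in\Sigma_\lambda$, whose two sides agree to leading order near the diagonal so that the correct sign must be read off from the boundary-correction terms --- together with the sign bookkeeping that combines these with the weight. A secondary point is the singularity of $h$ at $P=-e_1$: it does not affect the moving plane at fixed $\lambda>0$ (there the weight is bounded on $\Sigma_\lambda$), but it is precisely what makes the underlying representation \eqref{eq:representation} (supplied by Corollary~\ref{sol-conform} and ultimately by the Green-function analysis and the Grunau--Sweers estimates \eqref{grsw}) delicate, and some care is needed when passing to the limit $\lambda\to0^+$.
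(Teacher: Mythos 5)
Your overall plan is the paper's: the paper also runs the moving plane method directly on the integral representation \eqref{eq:representation}, uses the positivity of $G_1$ together with the weight monotonicity $|y^\lambda+e_1|^{-\alpha}\ge|y+e_1|^{-\alpha}$ on the cap, closes the argument with a Hardy--Littlewood--Sobolev small-measure estimate, and obtains reflectional symmetry about every hyperplane containing the $x_1$-axis. Your limiting argument at $\lambda\to 0$ and the overall ``infimum of good $\lambda$'' structure also match the paper's.

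However, there is a genuine gap precisely at the point you yourself flag as the main obstacle, and the fix you sketch does not work. To make the decomposed integral have a sign, you need not only the ``first-difference'' inequality $G_1(x^\lambda,y^\lambda)>G_1(x,y^\lambda)$ but the ``second-difference'' (supermodularity) inequality
\[
G_1(x^\lambda,y^\lambda)-G_1(x,y)\;>\;G_1(x,y^\lambda)-G_1(x^\lambda,y),
\qquad x,y\in H_\lambda\cap\B,
\]
equivalently $\bigl[G_1(x^\lambda,y^\lambda)-G_1(x,y^\lambda)\bigr]+\bigl[G_1(x^\lambda,y)-G_1(x,y)\bigr]>0$, together with $G_1(x^\lambda,y)>G_1(x,y)$ for $y\in J_\lambda$. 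Your proposed derivation — that $G_1$ is increasing in $A=(1-|x|^2)(1-|y|^2)$ at fixed $|x-y|$ and decreasing in $|x-y|$ at fixed $A$, combined with $|x^\lambda|<|x|$ and $|y^\lambda|<|y|$ — only settles the first-difference inequality. For the term $G_1(x^\lambda,y)-G_1(x,y)$ the two monotonicities pull in opposite directions (the reflection increases $A$ but also increases the distance $|x^\lambda-y|=|x-y^\lambda|>|x-y|$), and indeed this term can be of either sign. The supermodularity inequality is exactly what the paper imports from Berchio--Gazzola--Weth (Lemma~4 of \cite{BGW}; see also \cite{FGW} for $m=2$), where it is proved by a careful analysis of Boggio's kernel. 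You cannot get it from the two monotonicities alone, and it is the deepest input of the whole symmetry proof. A secondary, minor point: the paper bounds $G_1(x,y)\le c_{N,m}|x-y|^{1-N}$ uniformly in $N,m$ and applies HLS with this single kernel; your choice $|x-y|^{2m-N}$ works for $N>2m$ but requires the separate case discussion you mention for $N\le 2m$, which is avoidable.

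One further remark: your dichotomy at $\Lambda>0$ (either $w_\Lambda\equiv 0$ or $w_\Lambda>0$ in the interior) is more detours than needed. The paper derives the strict inequality $v(x)<v(x^{\lambda_*})$ directly from $(\cC_{\lambda_*})$ by retaining the strictly positive contribution of the integral over $J_{\lambda_*}$ (your $M_\lambda$), see \eqref{eq:new1}. This eliminates the ``$w_\Lambda\equiv0$'' branch from the outset and avoids the somewhat delicate ``equalities collapse, hence $v\equiv0$'' argument you sketch.
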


\begin{proof}
We assume that $N \ge 2$ and $v \not \equiv 0$, since otherwise the statement is trivial.
The integral representation \eqref{eq:representation} implies
that $v$ is strictly positive in $\B$. Note
that for every $x_0 \in \partial \B \setminus \{-e_1\}$ we have that
$$
\lim_{x \to x_0,\:x\in \B}v(x)=\lim_{x \to x_0,\:x\in \B}|x+e_1|^{2m-N}u(\phi(x))=|x_0+e_1|^{2m-N}
u\Big(2\frac{x_0+e_1}{|x_0+e_1|^2}-e_1\Big)=0
$$  
since $2\frac{x_0+e_1}{|x_0+e_1|^2}-e_1 \in \partial \R^N_+$. Hence
the function
$v$ -- extended trivially on $\partial \B \setminus \{-e_1\}$ -- is continuous in 
$\overline B \setminus \{-e_1\}$. We fix a unit vector $e \in \R^N$ perpendicular to 
$e_1$ (i.e., $|e|=1$ and $e \cdot e_1=0$), and we show that $v$ is symmetric with respect to the
hyperplane $T:=\{x \in \R^N\::\: x \cdot e=0\}$. For this we apply a
moving plane argument based on the integral representation
\eqref{eq:representation} and reflection inequalities derived in
\cite{BGW,FGW} for $G_1$. We need some notation. For $\lambda \ge 0$, we consider the open half-space 
$H_\lambda= \{x \in \R^N\::\: x \cdot e>\lambda\}$ and the reflection 
$x \mapsto x^\lambda:= x-2(x\cdot e-\lambda)e$ at
the hyperplane $\partial H_\lambda$. We also consider the set 
$$
J_{\lambda}:= \{x \in \B\::\: \text{$x \cdot e < \lambda$ and $x^{\lambda} \not \in \B$}\}
$$
which has nonempty interior if $\lambda>0$. 
With these definitions, the inequalities stated in \cite[Lemma 4]{BGW}
(see also \cite[Lemma 3]{FGW} for
the biharmonic case) translate into the following reflection inequalities: 
\begin{equation}
\label{eq:Glambda} \left.
\begin{aligned}
&G_1(x^\lambda,y^\lambda)> G_1(x,y^\lambda)\qquad
\text{and}\\
&G_1(x^\lambda,y^\lambda)-G_1(x,y)> G_1(x,y^\lambda)-
G_1(x^\lambda,y)
\end{aligned}
\quad \right \} \qquad \text{for all $x,y \in H_\lambda \cap \B$} 
\end{equation}
and 
\begin{equation}
  \label{eq:extra-est}
G_1(x^{{\lambda}},y)-G_1(x,y)>0 \qquad \text{for $x \in H_{\lambda}
  \cap \B$, $y \in J_\lambda$.}
\end{equation}
Now \eqref{eq:extra-est}
and the strict positivity of $v$ in $\B$ imply that 
\begin{equation}
  \label{eq:new1}
 \int_{J_{\lambda}}[G_1(x^{{\lambda}},y)-G_1(x,y)]\,
 h(y,v(y))\, dy>0  \qquad \text{for $\lambda>0$ and $x \in
H_{\lambda} \cap \B$.}
\end{equation}
We claim that the following reflection inequality holds for every $\lambda>0$:
$$
\leqno{(\cC_\lambda)}\qquad \qquad 
v(x)\le v(x^\lambda)\qquad \text{for all $x \in H_\lambda \cap \B$.}
$$
We put 
$$
\lambda_*:= \inf \{\lambda>0\::\: \text{$(\cC_{\lambda'})$ holds for all
    $\lambda' \ge \lambda$}\}. 
$$
Then $\lambda_* \le 1$. Using the continuity of $v$ in $\B$, it is easy to see that
$(\cC_{\lambda_*})$ holds. We suppose for contradiction that
$\lambda_*>0$. Since $0< |x+e_1|^{-\alpha}\le 
|x^{\lambda_*}+e_1|^{-\alpha}$ for $x \in H_{\lambda_*} \cap \B$ and
$v$ is positive in $\B$, 
$(\cC_{\lambda_*})$ yields
\begin{equation}
  \label{eq:hcomparison}
h(x,v(x))\le h(x^{\lambda_*},v(x^{\lambda_*}))\quad \text{for  $x
  \in H_{\lambda_*} \cap \B$.}  
\end{equation}
We claim that
\begin{equation}
\label{eq:ux-uxlambd-textf}
v(x)< v(x^{\lambda_*})\qquad \text{for all $x \in H_{\lambda_*} \cap \B$.}   
\end{equation}
Indeed, by using \eqref{eq:Glambda}, \eqref{eq:new1} and \eqref{eq:hcomparison} we have 
\begin{align*}
v&(x^{\lambda_*})-v(x)=
\int_{\B}[G_1(x^{{\lambda_*}},y)-G_1(x,y)]\, h(y,v(y))\, dy= \int_{H_{\lambda_*} \cap \B} \ldots \,dy 
+ \int_{\B \setminus H_{\lambda_*}} \ldots \,dy \\
&=
\int_{H_{\lambda_*} \cap \B}\Bigl([G_1(x^{{\lambda_*}},y)-G_1(x,y)]\,h(y,v(y))+
[G_1(x^{{\lambda_*}},y^{\lambda_*})
-G_1(x,y^{\lambda_*})]\,h(y^{\lambda_*},v(y^{\lambda_*}))\Bigr)\,dy\\ 
& +\int_{J_{\lambda_*}}[G_1(x^{{\lambda_*}},y)-G_1(x,y)]\, h(y,v(y))\, dy\\
& > \int_{H_{\lambda_*} \cap \B}[G_1(x^{{\lambda_*}},y^{\lambda_*})
-G_1(x,y^{\lambda_*})]\,[h(y^{\lambda_*},v(y^{\lambda_*}))-h(y,v(y))]\,dy
\ge 0 \quad \text{for $x \in H_{\lambda_*} \cap \B$.} 
\end{align*}
Hence \eqref{eq:ux-uxlambd-textf} is true.  For $0<\mu\le {\lambda_*}$ we now consider the
difference function 
$$
w_\mu: H_\mu \cap \B \to \R,\qquad w_\mu(x)= v(x^\mu)-v(x)
$$ 
and the set
$$
W_\mu:= \{x \in H_\mu \cap \B \::\:
w_\mu(x)<0\}. 
$$
We note that $W_{\lambda_*}= \varnothing$, and we claim that
\begin{equation}
  \label{eq:measure-zero}
|W_\mu| \to 0\qquad \text{as
  $\mu \to {\lambda_*}$,}  
\end{equation}
where $|\cdot|$ denotes the Lebesgue measure. Indeed, let $\eps \in
(0,\lambda_*)$, and consider the compact set
$$
K:=\{x \in \B \::\: x \cdot e \ge \lambda_*+\eps,\:|x| \le 1 -\eps\} \subset H_{\lambda_*} \cap \B
$$
Then $\inf \limits_{x \in K}w_{\lambda_*}(x)>0$ by \eqref{eq:ux-uxlambd-textf}. The
continuity of $v$ in $\B$ implies that there exists ${\lambda}_1 \in ({\lambda_*}-\eps,{\lambda_*})$ 
such that 
$$
\inf_{x \in K}w_\mu(x)>0\quad \text{for
  $\lambda_1 \le \mu \le {\lambda_*}$.}
$$
Hence $W_\mu \subset (H_\mu \cap \B) \setminus K \subset 
\{x \in \B \::\: \text{$|x_1-{\lambda_*}|\le \eps\:$ or  $\:|x| \ge 1-\eps $}\}$ 
and therefore
$$
|W_\mu| \le 2^N\eps + \eps \omega_{N-1} \qquad \text{for
  $\lambda_1 \le \mu \le {\lambda_*}$,}
$$
where $\omega_{N-1}$ denotes the area of the $N-1$-dimensional unit sphere.
Since $\eps$ was chosen arbitrarily small, \eqref{eq:measure-zero} follows.
Next we note that, for $\mu \ge \frac{\lambda_*}{2}$ and $x \in
W_\mu$, 
\begin{align}
h(x^\mu,v(x^\mu))&-h(x,v(x))=|x^\mu+e_1|^{-\alpha}v^q(x^\mu)-|x+e_1|^{-\alpha}v^q(x)
\ge |x+e_1|^{-\alpha}(v^q(x^\mu)-v^q(x))\nonumber\\
&\ge \Bigl(\frac{\lambda_*}{2}\Bigr)^{-\alpha}\![v^q(x^\mu)-v^q(x)]\ge
\Bigl(\frac{\lambda_*}{2}\Bigr)^{-\alpha}\!\!\!q v^{q-1}(x)[v(x^\mu)-v(x)]
\ge c(\lambda_*)w_\mu(x) \label{eq:h-Lipschitz}  
\end{align}
with $c(\lambda_*)=\Bigl(\frac{\lambda_*}{2}\Bigr)^{-\alpha}q
\Bigl(\:\sup \limits_{
x\in H_{\lambda_*/2}\cap \B}v(x)\Bigr)^{q-1}<\infty$. We also note that 
\begin{equation}
  \label{eq:h-final-est}
h(x^\mu,v(x^\mu))-h(x,v(x))  \ge 0 \qquad \text{for $x \in H_\mu
  \setminus W_\mu$.}  
\end{equation}
From now on we assume that $\frac{\lambda_*}{2} \le \mu \le \lambda_*$. For $x \in W_\mu$ 
we use \eqref{eq:Glambda},~\eqref{eq:new1}, \eqref{eq:h-Lipschitz} and
\eqref{eq:h-final-est} to estimate
\begin{align}
&0>w_\mu(x)=
\int_{\B}[G_1(x^{\mu},y)-G_1(x,y)]\, h(y,v(y))\, dy\nonumber\\
&>
\int_{H_\mu \cap \B}\!\Bigl([G_1(x^{\mu},y)-G_1(x,y)]\,h(y,v(y))+
[G_1(x^{\mu},y^\mu)-G_1(x,y^\mu)]\,h(y^\mu,v(y^\mu))\!\Bigr)dy\nonumber
\\ 
&\ge 
\int_{H_\mu \cap \B}[G_1(x^{\mu},y^\mu)-G_1(x,y^\mu)]\,[h(y^\mu,v(y^\mu))-h(y,v(y))]\,dy
\nonumber \\
&\ge 
\int_{W_\mu}[G_1(x^\mu,y^\mu)-G_1(x,y^\mu)]\,[h(y^\mu,v(y^\mu))-h(y,v(y))]\,dy \nonumber \\
&\ge 
c(\lambda_*)
\int_{W_\mu}[G_1(x^\mu,y^\mu)-G_1(x,y^\mu)]w_\mu(y)\,dy  \nonumber\\
& \ge c_{N,m}\: c(\lambda_*) \int_{W_\mu}|x-y|^{1-N}w_\mu(y)\,dy,\label{eq:W_mu-ineq}
\end{align}
where in the last step we use the estimate
$$
0 < G_1(x,y)\le c_{N,m}|x-y|^{1-N}\qquad \text{for $x,y \in \B$, $x \not=y$}
$$
with some $c_{N,m}>0$, which is easily deduced from the integral representation of $G_1$ in 
Section~\ref{sec:representation}. Next we pick $s>1$ large enough such that 
$1<q:=\frac{1}{\frac{1}{N}+\frac{1}{s}}<s$. Then the 
Hardy-Littlewood-Sobolev inequality (see e.g. \cite[Section 4.3]{lieb-loss})
implies that 
$$
\biggl(\int_{\R^N} \Bigl |\int_{W_\mu}|x-y|^{1-N}w_\mu(y)\,dy \Bigr|^s\:dx \biggr)^{\frac{1}{s}}= 
\Bigl\||\cdot|^{1-N} *
(1_{W_\mu} w_\mu )\Bigr\|_{L^s(\R^N)} \le c_{s,q} \|w_\mu\|_{L^q(W_\mu)}
$$
with a constant $c_{s,q}>0$. Combining this inequality with \eqref{eq:W_mu-ineq}
and H\"older's
inequality, we obtain 
\begin{equation}
  \label{eq:symmetry-final-est}
\|w_\mu \|_{L^s(W_\mu)} \le  c_0 \|w_\mu\|_{L^q(W_\mu)} 
\le c_0 |W_\mu|^{\frac{s-q}{sq}} \|w_\mu\|_{L^s(W_\mu)} \qquad \text{with 
$c_0:=c_{N,m}\, c(\lambda_*)\, c_{s,q}$.}
\end{equation}
Now \eqref{eq:measure-zero} and \eqref{eq:symmetry-final-est} imply that 
$\|w_\mu\|_{L^s(W_\mu)}=0$ if $\mu<{\lambda_*}$ is close enough to
${\lambda_*}$. Hence property $(\cC_\mu)$ holds if $\mu<{\lambda_*}$ is close enough to
${\lambda_*}$, which
contradicts the definition of $\lambda_*$. It follows that
$\lambda_*=0$, thus $(\cC_\lambda)$ holds for all $\lambda>0$, as claimed.
By continuity, we now deduce that 
$$
v(x)\le v(x^0)\qquad \text{for $x \in H_0 \cap \B$.}
$$
Repeating the moving plane procedure for $-e$ in place of
$e$, we get
$$
v(x)\ge v(x^0)\qquad \text{for $x \in H_0 \cap \B$.}
$$
Hence equality holds, i.e., $v$ is symmetric with respect to the
hyperplane $T=\{x \in \R^N\::\: x \cdot e=0\}$ as claimed. Since $e$ was chosen arbitrarily 
with $|e|=1$ and $e \cdot e_1=0$ we
conclude that $v$ is axially symmetric with respect to the $x_1$-axis.    
\end{proof}

\begin{corollary}
\label{one-dimensionality}
The function $u$ only depends on the $x_1$-variable.  
\end{corollary}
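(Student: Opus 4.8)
The plan is to use the translation invariance of problem \eqref{eq:liouville-half-space} in the directions tangent to $\partial\R^N_+$ in order to promote the axial symmetry of $u$ about the $x_1$-axis, which follows from Proposition~\ref{symmetry}, to axial symmetry about \emph{every} line parallel to the $x_1$-axis. The case $N=1$ is trivial, so I assume $N\ge2$.

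First I would note that for any $a=(0,a_2,\dots,a_N)\in\partial\R^N_+$ the translated function $\tilde u_a(z):=u(z+a)$ is again a classical, non-negative, bounded solution of \eqref{eq:liouville-half-space}. Indeed, $z\mapsto z+a$ is an isometry of $\R^N$ that maps $\R^N_+$ and $\partial\R^N_+$ onto themselves, commutes with $(-\Delta)^m$ and with the normal derivatives $\partial^j/\partial x_1^j$, and preserves boundedness, regularity and the sign. Hence the whole chain leading from Theorem~\ref{green-poiss-form_plus} through Lemma~\ref{green-conform} and Corollary~\ref{sol-conform} to Proposition~\ref{symmetry} applies verbatim with $u$ replaced by $\tilde u_a$; in particular the associated function $\tilde v_a(x):=|x+e_1|^{2m-N}\tilde u_a(\phi(x))$ is axially symmetric about the $x_1$-axis, hence symmetric with respect to each hyperplane $\{x\cdot e=0\}$ with $|e|=1$, $e\cdot e_1=0$.

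Next I would transfer this back to $u$. Writing $\sigma_e(x)=x-2(x\cdot e)e$ for the reflection at $\{x\cdot e=0\}$, the fact that $\sigma_e$ is orthogonal and fixes $e_1$ yields the equivariance identities $|\sigma_e x+e_1|=|x+e_1|$ and $\phi\circ\sigma_e=\sigma_e\circ\phi$ on $\B$ (and $\sigma_e$ maps $\R^N_+$ onto itself, as it leaves the first coordinate unchanged). Combining these with the symmetry of $\tilde v_a$ and the surjectivity of $\phi\colon\B\to\R^N_+$ gives $\tilde u_a\circ\sigma_e=\tilde u_a$, i.e. $u$ is invariant under the reflection $z\mapsto a+\sigma_e(z-a)$ at the hyperplane $\{z:(z-a)\cdot e=0\}$. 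Since this hyperplane contains the line $\{a+te_1:t\in\R\}$, and since $a\cdot e$ takes every real value as $a$ ranges over $\partial\R^N_+$ and $e$ over unit vectors orthogonal to $e_1$, I conclude that $u$ is symmetric with respect to every hyperplane of $\R^N$ that is parallel to the $x_1$-axis. To finish, given $p=(z_1,p')$ and $q=(z_1,q')$ in $\R^N_+$ with $p'\ne q'$, the perpendicular bisecting hyperplane of the segment $[p,q]$ has normal direction $(0,q'-p')\perp e_1$, hence is one of these symmetry hyperplanes, and the corresponding reflection swaps $p$ and $q$; therefore $u(p)=u(q)$, so $u$ is constant on every slice $\{x_1=\mathrm{const}\}\cap\R^N_+$ and thus depends on $x_1$ only.

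There is no genuine obstacle here, since all of the substance is contained in Proposition~\ref{symmetry}; the points needing a little care are (i) verifying that the tangential translates $\tilde u_a$ satisfy the hypotheses required to rerun Theorem~\ref{green-poiss-form_plus} and Proposition~\ref{symmetry} — routine, because such translations are isometries of the half-space commuting with $(-\Delta)^m$ and the normal derivatives — and (ii) the conformal bookkeeping, namely the equivariance $\phi\circ\sigma_e=\sigma_e\circ\phi$, which guarantees that a hyperplane symmetry of $\tilde v_a$ on $\B$ corresponds exactly to the same hyperplane symmetry of $\tilde u_a$ on $\R^N_+$. For $N=2$, ``axial symmetry about a line'' reduces to reflection symmetry across that line, but the argument is unchanged.
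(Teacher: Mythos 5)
Your proof is correct and follows essentially the same route as the paper: translate $u$ tangentially, observe that the translate still satisfies the hypotheses of Proposition~\ref{symmetry}, hence is axially symmetric about the $x_1$-axis, and conclude that $u$ is symmetric about every line parallel to the $x_1$-axis, which forces $u$ to depend on $x_1$ alone. You spell out a few more details (the conformal equivariance $\phi\circ\sigma_e=\sigma_e\circ\phi$, the perpendicular-bisector argument on slices) than the paper's terse three-sentence proof, but the underlying idea is identical.
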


\begin{proof}
Since $v$ is axially symmetric with respect to the $x_1$-axis, the
same is true for $u$. Let $z \in \R^{N-1}$ be arbitrary, and consider the function
$$
U_z: \R^N_+ \to \R,\qquad U_z(x_1,x')= u(x_1,x'-z)\quad \text{for
  $(x_1,x') \in [0,\infty) \times \R^{N-1}$.}  
$$
Then $U_z$ satisfies the same assumptions as $u$, so it is also
axially symmetric with respect to the $x_1$-axis.
This readily implies that $u$ only depends on the $x_1$-variable.
\end{proof}

\begin{theorem}
  \label{ode-case}
Let $f:[0,\infty) \to [0,\infty)$ be a continuous function with
$f(0)=0$ and $f(s)>0$ for $s>0$. If $u$ is a classical non-negative bounded 
solution of the one-dimensional problem
$$ 
(-1)^{m} u^{(2m)} = f(u)\quad \text{in $(0,\infty)$},\qquad u(0)=u'(0)=\ldots=
u^{(m-1)}(0) =0
$$
then $u\equiv 0$.
\end{theorem}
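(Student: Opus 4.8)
The plan is to peel one factor $-\partial_t^2$ off the equation at a time, turning the $2m$-th order ODE into statements about convexity and concavity, and then to play these against the boundary conditions at $0$ and the boundedness of $u$ at $+\infty$ (which, together with the $m$ conditions at $0$, is what pins $u$ down). I write $t$ for the variable in $[0,\infty)$. First, since $u$ is bounded and $f$ is continuous, $u^{(2m)}=(-1)^m f(u)$ is bounded, so by the standard interpolation (Landau--Kolmogorov) inequalities on the half-line all derivatives $u^{(j)}$, $1\le j\le 2m-1$, are bounded as well. I then set $g_k:=(-1)^k u^{(2k)}$ for $k=0,\ldots,m$, so that $g_0=u\ge 0$, $g_m=f(u)\ge 0$, every $g_k$ is bounded, and $g_k''=-g_{k+1}$ for $0\le k\le m-1$.

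The main step is a downward induction on $k$, from $k=m-1$ down to $k=1$ (the range is nonempty exactly when $m\ge2$), showing that each $g_k$ is monotone, satisfies $g_k(t)\to0$ as $t\to\infty$, and obeys $(-1)^{m-k}g_k\ge0$ on $[0,\infty)$. The mechanism is the same at every stage: knowing the sign of $g_{k+1}$, the relation $g_k''=-g_{k+1}$ says that $g_k$ is convex or concave; a bounded convex (resp.\ concave) function on $[0,\infty)$ is non-increasing (resp.\ non-decreasing) and has derivative tending monotonically to $0$, hence $g_k$ converges to $L_k:=\inf g_k$ (resp.\ $\sup g_k$); finally $g_{k-1}''=-g_k\to-L_k$ combined with the boundedness of $g_{k-1}$ forces $L_k=0$, which both fixes the sign of $g_k$ and yields $g_k\to0$. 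The base case $k=m-1$ runs the same way, starting from $g_{m-1}''=-f(u)\le0$. I expect this induction to be the principal (though elementary) point of the argument: one has to keep the convexity/monotonicity bookkeeping and the alternating signs straight, and to check that at each stage the function $g_{k-1}$ used to force $L_k=0$ really is bounded — which it is, the borderline case being $g_0=u$ itself.

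Feeding this down to $k=1$ (for $m=1$ the induction is empty and one simply uses $g_1=g_m=f(u)\ge0$ directly), I obtain $(-1)^m u''\ge0$ on $[0,\infty)$, i.e.\ $u$ is convex when $m$ is even and concave when $m$ is odd. If $m$ is even, then $u$ is convex and bounded, hence $u'$ is non-decreasing and everywhere $\le0$ (otherwise $u$ would be unbounded), so $u$ is non-increasing; together with $u(0)=0$ and $u\ge0$ this forces $u\equiv0$. If $m$ is odd, then $u$ is concave and bounded, hence $u'$ is non-increasing and everywhere $\ge0$ (otherwise $u\to-\infty$), so $u$ is non-decreasing and increases to $M:=\sup u$. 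Integrating the equation and using that $u^{(2m-1)}=(-1)^{m-1}g_{m-1}'\to0$ (because $g_{m-1}$ is concave and bounded) gives $\int_0^\infty f(u)\,dt<\infty$. If $M>0$, then, $u$ being non-decreasing with $u(0)=0$, we have $u(t)\ge M/2$ for all large $t$, whence $f(u(t))\ge\min_{[M/2,M]}f=:\delta>0$ there, contradicting $\int_0^\infty f(u)\,dt<\infty$. Therefore $M=0$ and $u\equiv0$.
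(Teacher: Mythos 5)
Your proof is correct, and it follows a genuinely different route from the one in the paper. The paper couples a first integral $H$ (which yields $u^{(m)}(0)=0$ once one knows $u^{(j)}(t)\to 0$) with the Green-function representation of Theorem~\ref{green-poiss-form_plus}: that representation gives strict positivity of $u$ and, for $m>1$, of $v=-u''$ as well, and then the triple $u(0)=0$, $u>0$, $u''<0$ in $(0,\infty)$ forces $u'(0)>0$, contradicting the boundary conditions (for $m=1$ one uses the uniqueness of the initial value problem instead). Your argument dispenses with both the Hamiltonian and the positivity machinery entirely: setting $g_k=(-1)^k u^{(2k)}$, so that $g_k''=-g_{k+1}$, $g_0=u\ge 0$ and $g_m=f(u)\ge 0$, you run a downward induction using only two elementary facts — that a bounded convex (resp.\ concave) function on $[0,\infty)$ is non-increasing (resp.\ non-decreasing) with derivative tending to zero, and that if $g_{k-1}$ is bounded and $g_{k-1}''=-g_k$ then $\lim g_k$ must vanish. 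The sign and parity bookkeeping check out (one gets $(-1)^{m-k}g_k\ge 0$ and $g_k\to 0$ at every stage), the terminal step at $k=1$ gives $(-1)^m u''\ge 0$, and your case split — $u$ convex and non-increasing with $u(0)=0$, $u\ge 0$ when $m$ is even; $u$ concave with $\int_0^\infty f(u)\,dt=u^{(2m-1)}(0)<\infty$ when $m$ is odd — closes the argument. What this buys is self-containment: the one-dimensional Liouville theorem is then a purely elementary ODE fact that no longer leans on the positivity of Boggio's kernel in dimension one, only on the Landau--Kolmogorov interpolation inequalities on $[0,\infty)$ (which both you and, implicitly, the paper need in order to know that $u',\dots,u^{(2m-1)}$ are bounded). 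A minor aside: for odd $m\ge 3$ you could bypass the final integral step, since concavity of $u$ together with the boundary condition $u'(0)=0$ forces $u'\le0$, while boundedness forces $u'\ge0$, hence $u\equiv0$; but the integral argument is needed for $m=1$ anyway, so your uniform treatment is reasonable.
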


\begin{proof}
The differential equation admits a first integral given by 
$$
H= \sum_{i=1}^{m-1}(-1)^i u^{(i)}u^{(2m-i)}+(-1)^m\Bigl(\frac{1}{2}(u^{(m)})^2+F(u)\Bigr),
$$
where $F(s):= \int_0^s f(s)\:ds$. Indeed, we calculate that
\begin{align*}
\frac{d H}{dt}&=\sum_{i=1}^{m-1}(-1)^i\Bigl(u^{(i+1)}u^{(2m-i)}+u^{(i)}u^{(2m-(i-1))}\Bigr) 
+(-1)^m\Bigl(u^{(m)}u^{(m+1)}+f(u)u'\Bigr)\\
&=-u'u^{(2m)}+(-1)^{m-1}u^{(m)}u^{(m+1)}+(-1)^m\Bigl(u^{(m)}u^{(m+1)}+f(u)u'\Bigr)\\
&=u'\Big((-1)^m f(u)-u^{(2m)}\Bigr)=0 \quad \text{in $(0,\infty)$.}
\end{align*}
Suppose for contradiction that $u \not \equiv 0$. Since $u$ has
a Green function representation by Theorem~\ref{green-poiss-form_plus}, we infer that 
$u$ is strictly positive in $(0,\infty)$, so that $u^{(2m)}=(-1)^{(m)}f(u)$ has no zero in
$(0,\infty)$. By the
mean value theorem, this implies that $u^{(j)}$ has at most $2m-j$
zeros in $(0,\infty)$ for $j=0,\dots,2m$. Hence every $u^{(j)}$ is
eventually monotone and has a limit as $t \to \infty$ since
it is bounded. From this it clearly follows that 
$$
u^{(j)}(t) \to 0 \qquad \text{as $t \to \infty$ for $j=1,\dots,2m$},
$$
but then also $u(t) \to 0$ by using the equation and the assumptions
on $f$ again. Since $F(0)=0$, we thus find that $H \equiv 0$ in
$[0,\infty)$. In particular, the boundary conditions yield
$$
0=H(0)=\frac{(-1)^m}{2}[u^{(m)}(0)]^2, 
$$
so that $u^{(m)}(0)=0$. If $m=1$
we conclude $u(0)=u'(0)=0$, so that $u \equiv 0$ by the uniqueness of
the solution of the initial value problem -- contrary to what we have assumed.

If $m>1$, we set $v=-u''$ and find that $v$ is a bounded solution of 
$$ 
(-1)^{m-1} v^{(2(m-1))} = f(u)>0\quad \text{in $(0,\infty)$,} \qquad v(0)=v'(0)=\ldots=
v^{(m-2)}(0) =0.
$$
So $v$ also has a Green function representation by Theorem~\ref{green-poiss-form_plus}, and thus $v>0$ in
$(0,\infty)$. In sum, we have 
$$
u(0)=0\quad \text{and}\quad u>0,u''<0 \quad \text{in $(0,\infty)$,}
$$
which forces $u'(0)>0$ and therefore contradicts the boundary
conditions. The proof is finished.
\end{proof}

\begin{altproof}{Theorem~\ref{rn_plus} (completed)}
By Corollary~\ref{one-dimensionality}, we have $u(x)=\hat u(x_1)$ 
with some function $\hat u:[0,\infty) \to \R$. Since $\hat u$
satisfies the assumptions of Theorem~\ref{ode-case}, we conclude that
$\hat u \equiv 0$ and therefore $u \equiv 0$.   
\end{altproof}


\section{Appendix}

\begin{lemma} Let $v$ be a classical bounded solution of $(-\Delta)^m v = g(v)$ in $\R^N$. If 
$g:\R\to [0,\infty)$ is convex and non-negative with $g(s)>0$ for $s<0$ then $v\geq 0$. 
\label{averages}
\end{lemma}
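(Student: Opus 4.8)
The plan is to argue by contradiction using spherical averages and Jensen's inequality. Suppose $v(x_0)<0$ for some $x_0\in\R^N$; we will derive a contradiction with the boundedness of $v$ and of its derivatives. First I would record a regularity observation: since $v$ is bounded and $g$, being convex, is continuous and locally Lipschitz, we have $g(v)\in L^\infty(\R^N)$, and interior elliptic estimates (the local version of Agmon--Douglis--Nirenberg, cf.\ Corollary~\ref{adn_local}, applied on each ball) together with Sobolev embedding give a bound on all derivatives of $v$ up to order $2m-1$ that is uniform over $\R^N$. In particular each $v_j:=(-\Delta)^jv$ is bounded on $\R^N$ for $j=0,\dots,m$, and $v_m=g(v)\ge 0$. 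For $r\ge 0$, let $M_j(r)$ denote the mean value of $v_j$ over the sphere $\partial B_r(x_0)$. Then each $M_j$ is smooth and bounded, $M_j'(0)=0$, and, since the Laplacian commutes with spherical averaging, $\Delta_r M_j=-M_{j+1}$ for $j\le m-1$, where $\Delta_r=\partial_r^2+\tfrac{N-1}{r}\partial_r$ (read as $\partial_r^2$ if $N=1$). By Jensen's inequality applied to $g$ and the normalized surface measure on $\partial B_r(x_0)$,
$$
M_m(r)=\text{mean of }g(v)\text{ over }\partial B_r(x_0)\ \ge\ g(M_0(r))\ \ge\ 0 .
$$

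The mechanism driving everything is the elementary fact that a bounded function $\psi\in C^2([0,\infty))$ with $\psi'(0)=0$ and $-\Delta_r\psi\ge 0$ is non-increasing: indeed $r^{N-1}\psi'(r)=-\int_0^r s^{N-1}(-\Delta_r\psi)(s)\,ds\le 0$, so $\psi$ decreases to a finite limit $\psi(\infty)$. I would use this to prove, by downward induction on $j=m,m-1,\dots,1$, that $M_j\ge 0$ on $[0,\infty)$. For the step from $M_j\ge 0$ (with $j\ge 2$) to $M_{j-1}\ge 0$: the above fact makes $M_{j-1}$ non-increasing with finite limit $L_{j-1}$; if $L_{j-1}<0$, then $M_{j-1}(s)\le L_{j-1}/2<0$ for all large $s$, and substituting this into $\bigl(r^{N-1}M_{j-2}'\bigr)'=-r^{N-1}M_{j-1}$ forces $M_{j-2}'(r)\gtrsim r$ for large $r$, hence $M_{j-2}(r)\to+\infty$, contradicting the boundedness of $M_{j-2}$ (here $j-2\ge 0$ is crucial); therefore $L_{j-1}\ge 0$ and $M_{j-1}\ge 0$.

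Now the conclusion. From $M_1\ge 0$ we get that $M_0$ is non-increasing, so $M_0(r)\le M_0(0)=v(x_0)<0$ for every $r\ge 0$. Hence $M_0$ takes its values in a fixed compact subinterval of $(-\infty,0)$ (contained in $[-\|v\|_\infty,v(x_0)]$), on which $g$ is continuous and strictly positive; thus $M_m(r)\ge g(M_0(r))\ge\delta$ for some $\delta>0$ and all $r$. But then $\bigl(r^{N-1}M_{m-1}'\bigr)'=-r^{N-1}M_m\le-\delta r^{N-1}$ gives $M_{m-1}'(r)\le-\tfrac{\delta}{N}r$, so $M_{m-1}(r)\to-\infty$ as $r\to\infty$, contradicting the boundedness of $M_{m-1}$ (for $m=1$ this is the boundedness of $M_0$ itself). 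This contradiction proves $v\ge 0$.

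The main obstacle is the bookkeeping of the induction rather than any single hard estimate: the signs of the intermediate values $(-\Delta)^jv(x_0)$ are not controlled, so nothing can be concluded at a single level; the key is that \emph{global} boundedness of every $M_j$ upgrades ``radially superharmonic'' to ``non-increasing with a finite limit'', and a strictly negative limit at one level would force unboundedness two levels below. One should also take care that the regularity bootstrap uses only continuity/local Lipschitz-ness of $g$ (no growth condition is available or needed), that $g>0$ on $(-\infty,0)$ is used exactly once — to get the uniform lower bound $\delta$ on the compact set of values of $M_0$ — and that the degenerate case $N=1$ is included with the convention $\tfrac{N-1}{r}\partial_r\equiv 0$.
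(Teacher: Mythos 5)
Your proof is correct and rests on the same basic machinery as the paper's: spherical averages around the point $x_0$ where $v$ is negative, Jensen's inequality for the convex $g$, and the observation that a strictly negative averaged quantity at one level forces unboundedness of the average two levels below, contradicting the a priori bounds. The bookkeeping, however, is genuinely reorganized. The paper first proves the \emph{pointwise} global inequalities $(-\Delta)^l v\ge 0$ on $\R^N$ for $l=1,\dots,m-1$, by contradiction with the maximal $l$ where this fails, and only afterwards runs the averaging argument around a point with $v(x_0)<0$. You avoid the pointwise step and carry out a single downward induction $M_m\ge 0\Rightarrow M_{m-1}\ge 0\Rightarrow\dots\Rightarrow M_1\ge 0$ directly on the averaged profiles: the finite limit $L_{j-1}$ of the monotone $M_{j-1}$ is forced to be $\ge 0$, since otherwise $M_{j-2}$ (bounded because $j-2\ge 0$) would blow up quadratically. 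This supplies at each step exactly the hypothesis the ODE estimate needs, with no detour through global sign information. Your final step is also slightly tighter than the paper's: the paper passes to $\alpha=\lim_{r\to\infty}\bar v_0(r)$ and writes $\lim\Delta\bar v_{m-1}=-g(\alpha)$, whereas Jensen really only gives the one-sided bound $\Delta\bar v_{m-1}\le -g(\bar v_0)$ (which of course suffices); you instead use that $M_0(r)\le v(x_0)<0$ for all $r$ together with continuity of $g$ on the compact interval $[-\|v\|_\infty,v(x_0)]$ to get a uniform lower bound $M_m\ge\delta>0$ directly. Both routes are correct; yours is the cleaner bookkeeping, and your regularity preamble (interior Agmon--Douglis--Nirenberg estimates plus Sobolev embedding giving uniform $C^{2m-1}$ bounds) makes explicit what the paper leaves as a one-line remark.
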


\begin{proof} Notice that the boundedness of $v$ implies the boundedness of $D^j v$ for 
$j=1,\ldots,2m$. First we show that $(-\Delta)^l v \geq 0$ in 
$\R^N$ for $l=1,\ldots,m-1$. Assume that there exists $l\in \{1,\ldots,m-1\}$ and 
$x_0\in \R^N$ with $(-\Delta)^l 
v(x_0)<0$ but $(-\Delta)^j v\geq 0$ in $\R^N$ for $j=l+1,\ldots,m$. 
We may assume w.l.o.g. that $x_0=0$. Let $v_l := (-\Delta)^l v$ for $l=1,\ldots, m-1$ and 
set $v_0=v$. Then we have 
$$
-\Delta v_0 = v_1, \quad -\Delta v_1 = v_2, \quad \ldots \quad -\Delta v_{m-1}= g(v_0) \mbox{ in } 
\R^N. 
$$
If we define spherical averages $\bar w(x) = \frac{1}{r^{N-1}\omega_N} \oint_{\partial B_r(0)} w(y) 
\,d\sigma_y$, $r=|x|$ then the radial functions $\bar v_0, \bar v_1,\ldots,\bar v_{m-1}$ satisfy 
$$
-\Delta \bar v_0 = \bar v_1, \quad -\Delta \bar v_1 = \bar v_2, \quad \ldots \quad 
-\Delta \bar v_{m-1} \geq  g(\bar v_0) \mbox{ in } \R^N,
$$
where we have used Jensen's inequality and the convexity of $g$. Since $v_l(0)<0$ we also have 
$\bar v_l(0)<0$. Moreover $(r^{N-1}\bar v_l')'=-r^{N-1} \bar v_{l+1}$ or $\leq -r^{N-1}g(\bar v_0)$, 
which in both cases is non-positive. Since $\bar v_l'(0)=0$ we see that $\bar v_l'(r)\leq 0$ for all 
$r>0$. In particular $ \bar v_l(r)\leq \bar v_l(0)<0$. Next we integrate the equation 
$$
\Delta \bar v_{l-1}= -\bar v_l\geq -\bar v_l(0)>0
$$
and obtain $r^{N-1} \bar v_{l-1}'(r)\geq -\frac{r^N}{N} \bar v_l(0)$, i.e, $\bar v_{l-1}'(r) \geq 
-\frac{r}{N} \bar v_l(0)$. The unboundedness of $\bar v_{l-1}'$ yields a contradiction. 

\smallskip

Finally, we need to show that $v=v_0\geq 0$. Assume that $v_0(x_0)<0$ and w.l.o.g. $x_0=0$. Since 
$\Delta \bar v_0=-\bar v_1\leq 0$ we see that $\bar v_0'(r)\leq 0$ and define  
$\alpha:= \lim_{r\to\infty} \bar v_0(r)<0$. Then $\lim_{r\to \infty} \Delta \bar v_{m-1}(r) 
= - g(\alpha)<0$, i.e., $\Delta \bar v_{m-1}(r) \leq -\frac{1}{2}g(\alpha)<0$ for $r \geq r_0$. 
By integration this leads to $\lim_{r\to \infty}\bar v_{m-1}'(r)=-\infty$, which contradicts the 
boundedness of $\bar v_{m-1}'$. This finishes the proof of the lemma.
\end{proof}

\bigskip

\noindent
{\bf Acknowledgement.} The authors thank Hans-Christoph Grunau (Magdeburg) for 
helpful comments and remarks.




\begin{thebibliography}{99}
\bibitem{AF} R.A.~Adams and J.J.F.~Fournier: Sobolev spaces.
Pure and Applied Mathematics, Academic Press, 2nd edition, 2003. xiii+305 pp. 
\bibitem{ADN} S.~Agmon, A.~Douglis and L.~Nirenberg: {\sl Estimates near the 
boundary for solutions of elliptic partial differential equations satisfying 
general boundary conditions. I.} Comm. Pure Appl. Math. {\bf 12} (1959), 
623--727. 
\bibitem{A} A.D.~Alexandrov: {\sl A characteristic property of the spheres.} Ann. Mat. Pura Appl. 
{\bf 58} (1962), 303--354.
\bibitem{BGW} E.~Berchio, F.~Gazzola and T.~Weth: {\sl Radial symmetry of 
positive solutions to nonlinear polyharmonic Dirichlet problems}. To appear in
J. Reine Angew. Math.
\bibitem{BiLoWa} M.~Birkner, J.A.~L\'{o}pez-Mimbela, A. Wakolbinger: {\sl Comparison results and 
steady states for the Fujita equation with fractional Laplacian.}  Ann. Inst. H. Poincar\'{e} 
Anal. Non Lin\'{e}aire  {\bf 22}  (2005), 83--97.
\bibitem{BO} T.~Boggio: {\sl Sulle funzioni di Green d'ordine $m$.}
  Rend. Circ. Mat. Palermo {\bf 20} (1905), 97--135.
\bibitem{BT} H.~Br\'{e}zis, R.E.L.~Turner: {\sl On a class of superlinear elliptic problems.}
Comm. Partial Differential Equations {\bf 2} (1977), 601--614.
\bibitem{Chang_Yang} Sun-Yung A. Chang, Paul C. Yang: {\sl On uniqueness of solutions of $n$th 
order differential equations in conformal geometry.}  Math. Res. Lett. {\bf 4} (1997), 91--102.
\bibitem{Chen_Li_Ou} Wenxiong Chen, Congming Li and Biao Ou: {\sl Classification of solutions 
for an integral equation.}  Comm. Pure Appl. Math. {\bf 59}  (2006), 330--343.
\bibitem{DLN} D.G.~de Figueiredo, P.-L.~Lions, R.D.~Nussbaum: {\sl A priori estimates and 
existence of positive solutions of semilinear elliptic equations.}  J. Math. Pures Appl. {\bf 61} 
(1982), no. 1, 41--63.
\bibitem{FGW} A.~Ferrero, F.~Gazzola and T.~Weth: {\sl Positivity, symmetry and uniqueness for 
minimizers of second order Sobolev inequalities.} Ann. Mat. Pura Appl. {\bf 186} (2007), 565--578.  
\bibitem{GNN} B.~Gidas, Wei-Ming Ni and L. Nirenberg: {\sl Symmetry and related properties via the 
maximum principle.}  Comm. Math. Phys.  {\bf 68}  (1979), 209--243.
\bibitem{GS1} B.~Gidas and J.~Spruck: {\sl A priori bounds for positive 
solutions of nonlinear elliptic equations.}  Comm. Partial Differential 
Equations {\bf 6} (1981), 883--901.
\bibitem{GS2} B.~Gidas and J.~Spruck: {\sl Global and local behavior of 
positive solutions of nonlinear elliptic equations.} Comm. Pure Appl. Math. 
{\bf 34} (1981), 525--598.
\bibitem{GrSw} H.-Ch.~Grunau and G.~Sweers: {\sl Positivity for equations
  involving polyharmonic operators with Dirichlet boundary conditions.}
  Math. Ann. {\bf 307} (1997), 589--626.
\bibitem{yyli} Yan Yan Li: {\sl Remark on some conformally invariant integral equations: the 
method of moving spheres.}  J. Eur. Math. Soc. {\bf 6} (2004), 153--180.
\bibitem{lieb-loss} E.H.~Lieb and M.~Loss: Analysis.
Graduate Studies in Mathematics, 14. American Mathematical Society, Providence, RI, 1997. 
\bibitem{Lin} Chang-Shou Lin: {\sl A classification of solutions of a 
conformally invariant fourth order equation in $\R^n$.} Comment. Math. Helv. 
{\bf 73} (1998), 206--231.
\bibitem{MCR} P.J.~McKenna and W.~Reichel: {\sl A priori bounds for semilinear equations and a 
new class of critical exponents for Lipschitz domains.} J. Funct. Anal. {\bf 244}  (2007), 
220--246. 
\bibitem{QS} P.~Quittner and Ph.~Souplet: {\sl A priori estimates and existence for elliptic 
systems via bootstrap in weighted Lebesgue spaces.} Arch. Ration. Mech. Anal. {\bf 174} (2004), 
49--81.
\bibitem{S} J. Serrin: {\sl A symmetry problem in potential theory.} Arch. Ration. Mech. Anal. 
{\bf 43} (1971), 304--318.
\bibitem{SI} B. Sirakov: {\sl Existence results and a priori bounds for higher order elliptic 
equations}, preprint.
\bibitem{SO} R. Soranzo: {\sl A priori estimates and existence of positive solutions of a 
superlinear polyharmonic equation.} Dynam. Systems Appl. {\bf 3} (1994), 465--487.
\bibitem{WeiXu} Juncheng Wei and Xingwang Xu: {\sl Classification of solutions 
of higher order conformally invariant equations.} Math. Ann. {\bf 313} (1999), 
207--228.
\end{thebibliography}
\end{document}